\documentclass[12pt]{amsart}

\usepackage[color,notcite]{ }
\overfullrule=0pt

\def \log{\ln}

\usepackage{hyperref}
\usepackage[headinclude,DIV13]{typearea}
\areaset{15.1cm}{25.0cm}
\parskip 0pt plus .5pt
\usepackage{txfonts,amssymb,amsmath,amsthm,bbm}

\usepackage{graphicx, psfrag}
\usepackage{xcolor}
\usepackage{subfigure}

\newtheorem{theorem}{Theorem}[section]
\newtheorem{lemma}[theorem]{Lemma}
\newtheorem{proposition}[theorem]{Proposition}

\newtheorem{conjecture}[theorem]{Conjecture}

\definecolor{bbm}{RGB}{51,153,0}
\definecolor{above}{RGB}{128,0,128}
\definecolor{below}{RGB}{102,0,204}
\definecolor{cascade}{RGB}{204,0,0}
\definecolor{iid}{RGB}{153,51,0}

\theoremstyle{remark}
\newtheorem*{remark}{Remark}

\def\paragraph#1{\noindent \textbf{#1}}

\numberwithin{equation}{section}

\def\d{\mathrm{d}}

\def\<{\langle}
\def\>{\rangle}

\def\a{\alpha}

\def\e{\epsilon}

\def\s{\sigma}

\def\D{\Delta}

\def\N{{\Bbb N}}  
\def\P{{\Bbb P}}

\def\E{{\Bbb E}}

\let\cal=\mathcal

\def\FF{{\cal F}}

 \def \s {{\sigma}}

 \def \D {{\Delta}}

 \def \d {{\delta}}
 \def \a {{\alpha}}

 \def \ba {\begin{array}}
 \def \ea {\end{array}}



 \newcommand{\be}{\begin{equation}}
 \newcommand{\ee}{\end{equation}}

\newcommand{\bea}{\begin{eqnarray}}
 \newcommand{\eea}{\end{eqnarray}}
\def\TH(#1){\label{#1}}\def\thv(#1){\ref{#1}}
\def\Eq(#1){\label{#1}}\def\eqv(#1){(\ref{#1})}

\def\sfrac#1#2{{\textstyle{#1\over #2}}}

 \def \1{\mathbbm{1}}


\newcommand{\Leb}{{\rm Leb}}

\begin{document}

 \title[High points of the randomized RZF ]
{High points of a random model of the Riemann-zeta function and Gaussian multiplicative chaos}
\author[L.-P. Arguin]{Louis-Pierre Arguin}
 \address{L.-P. Arguin\\ Department of Mathematics\\
Baruch College and Graduate Center City University of New York\\
New York, New York 10010\\
USA}
\thanks{The research of L.-P. A. is supported in part by NSF CAREER~DMS-1653602. 
}

\email{louis-pierre.arguin@baruch.cuny.edu}

\author[L. Hartung]{Lisa Hartung}
 \address{L. Hartung\\ Institut f\"ur  Mathematik\\
Johannes Gutenberg-Universit\"at Mainz\\
Staudingerweg 9\\
55099 Mainz\\
Germany  }
\email{lhartung@uni-mainz.de}
\author[N. Kistler]{Nicola Kistler}
 \address{N.Kistler\\Institut f\"ur  Mathematik\\
Goethe-Universit\"at Frankfurt \\ Robert-Mayer-Str. 10\\
60325 Frankfurt\\ Germany }
\email{kistler@math.uni-frankfurt.de}

\subjclass[2000]{60J80, 60G70, 82B44} \keywords{Riemann-Zeta function, high points, Gaussian multiplicative chaos, extreme values } 

\date{\today}

 \maketitle 
  \begin{abstract}  
 We study the total mass of high points in a random model for the Riemann-Zeta function. We consider the same model as in \cite{harper13, ABH17}, and build on the convergence to 'Gaussian' multiplicative chaos proved in \cite{SW16}. We show that the total mass of points which are a linear order below the maximum divided by their expectation converges almost surely to the Gaussian multiplicative chaos of the approximating Gaussian process times a random function. We use the second moment method together with a branching approximation to establish this convergence.
  \end{abstract}

\section{Introduction}

 \subsection{The model}

Let $\mathcal{P}$ denote the set of all prime numbers. Let $(\theta_p)_{p\in\mathcal{P}}$ be independent identically distributed random variables, being uniformly distributed on $[0,2\pi]$. For $N\in\mathbb{N}$, a good model for the large values of the logarithm of the Riemann-zeta function on a typical interval of length $1$ of the critical line as proposed in \cite{harper13}  is
 \be\Eq(model.1)
 X_N(x)=\sum_{j=1}^N \frac{1}{\sqrt{p_j}} \left(\cos(x\log p_j) \cos( \theta_{p_j})+\sin(x\log p_j) \sin( \theta_{p_j})\right) \quad x\in[0,1]\ .
 \ee
 
By Theorem 7 in \cite{SW16},  the process $X_N$ can be well approximated by a log-correlated Gaussian field $G_N(x),x\in[0,1]$. Namely, take
 \be\Eq(model.2)
 G_N(x)= \sum_{j=1}^N \frac{1}{2\sqrt{p_j}} \left(W_j^{(1)}\cos(x\log p_j)  +W_j^{(2)}\sin(x\log p_j)  \right),
 \ee
where  $(W_j^{(i)})_{j\in \mathbb{N},i\in \{1,2\}}$ are i.i.d.\ standard normal distributed. It is shown in \cite{SW16} that
\be\Eq(model.3)
X_N(x)-G_N(x)\equiv E_N(x), \quad x\in[0,1],
\ee
where $E_N(x)$ converges almost surely uniformly to a  random function $E(x)$. Moreover, the error $E_N(x)$ has uniform exponential moments
\be
\Eq(model.4)
\E\left(e^{\lambda \sup_{N\geq 1,x\in[0,1]}E_N(x)}\right)<\infty,
\ee
 where $\E$ denotes expectation with respect to the $\theta_p$'s.

Some of the behavior of the large values of the process $X_N(x)$, $x\in[0,1]$ is captured by the random measure
\be
M_{\alpha,N}(dx)=\frac{e^{\alpha X_N(x)}}{\mathbb{E}e^{\alpha X_N(x)} }dx\ .
\ee
By the independence of the $\theta_p$'s, it is not hard to see that $M_{\alpha,N}$ converges almost surely as $N\to\infty$.
By Theorem 4 in \cite{SW16}, the almost sure weak limit of $M_{\alpha,N}(dx)$ is non-trivial for $0<\alpha<2$. 
We denote the limit of the total mass by $M_\alpha$
\be\Eq(model.5)
M_\a= \lim_{N\to\infty} \int_0^1 M_{\alpha,N}(dx) \ \ a.s. 
\ee
 For log-correlated Gaussian field the analogous limiting measure is called Gaussian multiplicative chaos and $M_\a$ corresponds to the total mass of the limiting measure. For Gaussian multiplicative chaos it was first proven by \cite{K85} that the limit is nontrivial for small $\a$ and was recently revisited (see \cite{RoV10, RhV14}). Note that in our case the limit of $M_{\a,N}(dx)$ is almost a Gaussian multiplicative measure (see \cite{SW16}). The connection between the Riemann-zeta function and Gaussian multiplicative chaos has been further analysed in \cite{SW16n}.

The fact that the Riemann-zeta function (or a random model of it) can be well approximated by a log-correlated field have recently been used to study the extremes on a random interval \cite{CNN17, N18, ABH17}.
\subsection{Main result}

Consider the Lebesgue measure of $\alpha$-high points:
\be\Eq(result.1)
W_{\alpha,N}=\Leb\{X_N(x)>\frac{\alpha}{2} \log\log N\}
\ee
The main result of this note is to relate the limit $M_\alpha$ to the Lebesgue measure of high points building on the ideas of \cite{GKS18}:
\begin{theorem}\TH(THM.1)
For any $0<\a<2$ and $M_{\alpha}$ as in \eqv(model.5), we have
\be\Eq(result.2)
 \frac{W_{\a,N}}{\E\left(W_{\a,N}\right)} \to M_\a,  
\ee
in probability as $N\to \infty$.
\end{theorem}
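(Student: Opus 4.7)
By \eqv(model.4) and the almost sure uniform convergence $E_N\to E$, the error $E_N(x)$ is essentially bounded uniformly in $N$ and $x$. Writing $X_N=G_N+E_N$, proving Theorem~\thv(THM.1) reduces to the corresponding statement for the Gaussian field $G_N$ with level $\tfrac\alpha2\log\log N-E_N(x)$; the multiplicative factor $e^{\alpha E_N(x)}$ appearing in the normalization $\E e^{\alpha X_N(x)}$ of $M_{\alpha,N}$ is precisely what distinguishes the Gaussian-chaos limit from $M_\alpha$, and is retrieved from the a.s.\ convergence of $M_{\alpha,N}$ proved in \cite{SW16}.

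\textbf{Multiscale decomposition.} Pick an intermediate scale $n=n(N)\to\infty$ with $\sigma_n^2:=\Var G_n(x)=o(\sigma_N^2)$, e.g.\ $n=\lfloor\log N\rfloor$. By \eqv(model.2) the increment $\widetilde G_{n,N}(x):=G_N(x)-G_n(x)$ is independent of $\mathcal F_n:=\sigma(W_j^{(i)}:j\le n)$. Introduce the barrier event $A_n(x)$ that constrains the trajectory $k\mapsto G_k(x)$ to stay in a tube of width $o(\sigma_{n,N})$ around the linear-in-variance profile $\alpha\sigma_k^2$ for $n_0\le k\le n$, together with the truncated mass
\[
W^{\flat}_{\alpha,N}=\int_0^1\1\{G_N(x)>\tfrac\alpha2\log\log N\}\,\1_{A_n(x)}\,dx.
\]
A standard ballot-type estimate for the independent-increment walk $k\mapsto G_k(x)$ gives, in the subcritical regime $\alpha<2$, $\E W^{\flat}_{\alpha,N}=(1+o_n(1))\E W_{\alpha,N}$, so it suffices to analyze $W^{\flat}_{\alpha,N}$.

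\textbf{Factorization of the conditional mean.} On $A_n(x)$ one has $|G_n(x)-\alpha\sigma_n^2|=o(\sigma_{n,N})$, so a uniform Mills-ratio expansion of the Gaussian tail of $\widetilde G_{n,N}(x)$ yields
\[
\P\bigl(G_N(x)>\tfrac\alpha2\log\log N\,\big|\,\mathcal F_n\bigr)=(1+o(1))\,\P\bigl(G_N(x)>\tfrac\alpha2\log\log N\bigr)\,e^{\alpha G_n(x)-\frac{\alpha^2}{2}\sigma_n^2}
\]
uniformly on $A_n(x)$. Integrating,
\[
\E[W^{\flat}_{\alpha,N}\mid\mathcal F_n]=(1+o(1))\,\E[W_{\alpha,N}]\int_0^1 e^{\alpha G_n(x)-\frac{\alpha^2}{2}\sigma_n^2}\,\1_{A_n(x)}\,dx,
\]
and the right-hand integral converges almost surely as $n\to\infty$ to the Gaussian analogue of $M_\alpha$, and, by Step~1, to $M_\alpha$ itself.

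\textbf{Second moment and conclusion.} The main quantitative estimate is
\[
\E\bigl[(W^{\flat}_{\alpha,N})^2\mid\mathcal F_n\bigr]=(1+o(1))\bigl(\E[W^{\flat}_{\alpha,N}\mid\mathcal F_n]\bigr)^2\quad\text{in probability,}
\]
which via Chebyshev gives $W^{\flat}_{\alpha,N}-\E[W^{\flat}_{\alpha,N}\mid\mathcal F_n]=o(\E W_{\alpha,N})$ in probability. Expanding the square reduces the bound to a double integral of the joint tail $\P(G_N(x)>c_N,G_N(y)>c_N\mid\mathcal F_n)$ on $A_n(x)\cap A_n(y)$, with $c_N=\tfrac\alpha2\log\log N$. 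By the logarithmic covariance the two Gaussians decouple for $|x-y|\gg 1/p_N$, whereas on small scales the barrier forces the two trajectories to split before level $k=n$ and prevents them from being simultaneously too high; the resulting pair-correlation integral is finite precisely when $\alpha<2$, which is where the subcriticality hypothesis enters. The joint ballot estimate controlling pairs of correlated trajectories is the main technical obstacle; everything else is a careful but routine chain of Gaussian computations. Sending $N\to\infty$ followed by $n\to\infty$ then delivers $W_{\alpha,N}/\E W_{\alpha,N}\to M_\alpha$ in probability.
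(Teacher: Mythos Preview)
Your proposal follows the same two-step scheme as the paper: split $W_{\alpha,N}/\E W_{\alpha,N}$ into a conditional first moment that converges to $M_\alpha$ plus a fluctuation killed by a second-moment/barrier estimate, with the second moment organised by the branching point $x\wedge x'$ (your near/far dichotomy is the paper's four-case split $(I)$--$(IV)$ in Lemma~\thv(Lem.sec4) and Lemma~\thv(lem.four)). So the architecture is the same.

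The one place where your write-up is genuinely looser than the paper's is the handling of the error $E_N$, and here there is a small inconsistency. The paper conditions on $\FF_R=\sigma(\theta_p:p\le R)$, under which $X_R=G_R+E_R$ is measurable; the Mills-ratio expansion of the tail of $G_N-G_R$ then produces the factor $e^{\alpha X_R(x)}=e^{\alpha(G_R(x)+E_R(x))}$ in the integrand, and integrating gives $M_{\alpha,R}$ on the nose (this is exactly Lemma~\thv(Lem.first2)). You instead condition on the Gaussian filtration $\sigma(W_j:j\le n)$ and, between your Steps~1 and~3, silently drop $E_N$ from the level: your displayed Mills-ratio identity in Step~3 has threshold $\tfrac\alpha2\log\log N$, not $\tfrac\alpha2\log\log N-E_N(x)$, so what it actually produces is $\int_0^1 e^{\alpha G_n-\alpha^2\sigma_n^2/2}\,dx$, the \emph{Gaussian} chaos mass, not $M_\alpha$. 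Your Step~1 assertion that the missing factor $e^{\alpha E_N}$ is ``retrieved from~\cite{SW16}'' does not explain where in the argument it re-enters. This is not fatal---under the Saksman--Webb coupling the two filtrations agree prime by prime, and if you carry the level $\tfrac\alpha2\log\log N-E_R(x)$ through the tail expansion (using $|E_N-E_R|<\e$ as in \eqv(first.9)) the factor $e^{\alpha E_R(x)}$ appears next to $e^{\alpha G_n(x)}$ and you recover $M_{\alpha,R}$ exactly as the paper does---but as written your Steps~1 and~3 do not fit together and the outline lands on the wrong limit.
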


It was proved in \cite{ABH17} that the maximum of $X_N(x)$ on $[0,1]$ is $\log\log N-(3/4\pm \e) \log\log\log N$ with large probability. 
In view of this and of Theorem \thv(THM.1), it is not surprising to see that the $M_\alpha$ is non-trivial for $\alpha<2$.
The critical case where  $\alpha\to 2$ is interesting as it is related to the fluctuations of the maximum of $X_N$. 
It is reasonable to expect that our approach can be adapted to the method of \cite{Dup14a} to prove the critical case.
Another upshot of the proof is that it highlights the fact that $M_\alpha$ depends on small primes, cf. Lemma \thv(Lem.first2).

The problem for the Riemann-zeta function is trickier.  We expect that the equivalent of Theorem \thv(THM.1) still holds:
\begin{conjecture}
Let $\tau$ be a uniform random variable on $[T,2T]$. 
Let $W_{\alpha, T}=\Leb\{h\in[0,1]:\log |\zeta(1/2+i(\tau+h))|>\frac{\alpha}{2} \log\log T\}$. Then we have
$$
\lim_{T\to\infty} 
\frac{W_{\a, T}}{\E[W_{\a, T}]}
=\lim_{T\to\infty} \frac{\int_0^1|\zeta(1/2+i(\tau+h))|^\alpha}{\E[|\zeta(1/2+i\tau|^\alpha]}\quad a.s.
$$
\end{conjecture}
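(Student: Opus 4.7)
The plan is to combine a conditional first-moment identification of $M_\alpha$ with a truncated second-moment argument, in the spirit of \cite{GKS18}. First I would use \eqv(model.3) together with the exponential-moment bound \eqv(model.4) to work interchangeably with $X_N$ and the Gaussian field $G_N$: the bounded additive error $E_N$ contributes only a bounded multiplicative factor to both $W_{\alpha,N}$ and $\E W_{\alpha,N}$, whose limit is in fact the "random function" mentioned in the abstract. Then fix a cutoff $K=K(N)\to\infty$ slowly and decompose $X_N(x)=X_K(x)+\bar X_{K,N}(x)$ into independent pieces built from disjoint families of primes. The identification of $M_\alpha$ comes from the conditional first moment: given $X_K$, the Gaussian tail for $\bar X_{K,N}$ (whose variance is $\sigma_{K,N}^2 \sim \tfrac{1}{2}(\log\log N - \log\log p_K)$) yields, for $X_K(x)$ of moderate size,
\be
\P\lb X_N(x)>\tfrac{\alpha}{2}\log\log N \mid X_K\rb \sim \frac{c_\alpha}{\sqrt{\log\log N}}(\log N)^{-\alpha^2/4}(\log p_K)^{-\alpha^2/4}\,e^{\alpha X_K(x)},
\ee
by expanding $(u_N - X_K(x))^2/2\sigma_{K,N}^2$ and using $u_N/\sigma_{K,N}^2 \to \alpha$. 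Integrating over $x\in[0,1]$ and dividing by $\E W_{\alpha,N}$,
\be
\frac{\E[W_{\alpha,N}\mid X_K]}{\E W_{\alpha,N}} \xrightarrow[N\to\infty]{} \int_0^1 \frac{e^{\alpha X_K(x)}}{\E e^{\alpha X_K(x)}}\,dx = M_{\alpha,p_K} \xrightarrow[K\to\infty]{} M_\alpha \ \ \text{a.s.}
\ee

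To upgrade this to convergence in probability of the ratio itself, I would introduce a truncated variant
\be
W^{\mathrm{tr}}_{\alpha,N} := \Leb\lbr x\in[0,1]:\, X_N(x)>\tfrac{\alpha}{2}\log\log N,\ X_j(x) - X_K(x) \leq \alpha s_j^2 + c\ \forall\, K<j\leq N\rbr,
\ee
with $s_j^2 = \Var(X_j(x)-X_K(x))$. A Brownian ballot/entropic-repulsion estimate on the walk $j\mapsto X_j(x)-X_K(x)$, along the lines of the maximum analysis of \cite{ABH17}, gives $\E[W_{\alpha,N}-W^{\mathrm{tr}}_{\alpha,N}] = o(\E W_{\alpha,N})$, so it suffices to concentrate $W^{\mathrm{tr}}_{\alpha,N}$. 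The conditional second moment $\E[(W^{\mathrm{tr}}_{\alpha,N})^2 \mid X_K]$ is then handled by stratifying the double integral over $(x,y)$ by the decorrelation index $j(x,y):=\min\{j:|x-y|\log p_j \gtrsim 1\}$; the barrier forces each stratum's contribution to scale as $(\E[W^{\mathrm{tr}}_{\alpha,N}\mid X_K])^2$ rather than as $\E[W^{\mathrm{tr}}_{\alpha,N}\mid X_K]$. A conditional Chebyshev then yields concentration of $W^{\mathrm{tr}}_{\alpha,N}/\E[W^{\mathrm{tr}}_{\alpha,N}\mid X_K]$ around $1$, and combining with the first-moment limit above one lets $K\to\infty$ to conclude.

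The main obstacle is the second-moment estimate. Without the barrier the log-correlated structure produces $\E(W_{\alpha,N})^2$ of order $\E W_{\alpha,N}$ rather than $(\E W_{\alpha,N})^2$, due to the cluster contribution from pairs $(x,y)$ that remain correlated across many scales; this is the familiar obstruction in the extremal theory of log-correlated fields. Choosing the barrier slope precisely $\alpha$ kills this cluster term, and one must simultaneously verify that the barrier is loose enough that the Brownian ballot estimate loses only a constant factor, so as to preserve the first-moment asymptotics. Making these Brownian-type approximations precise for the random-prime walk, with matching upper and lower bounds on the barrier crossing probabilities at every scale in the range $K<j\leq N$, is the technical heart of the proof.
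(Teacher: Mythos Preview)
The statement you were asked to prove is a \emph{conjecture} about the actual Riemann zeta function $\log|\zeta(1/2+i(\tau+h))|$, not the random model $X_N$. The paper does not prove it; it is stated as an open problem following Theorem~\thv(THM.1). Your proposal is in fact a sketch of the paper's proof of Theorem~\thv(THM.1) for the random model (and at that level it is essentially the same approach: conditional first moment identifying $M_\alpha$ via the small primes, then a barrier-truncated second moment using the branching approximation of \cite{ABH17}). But it does not address the conjecture.

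The concrete gap is your first move: you invoke \eqv(model.3) and \eqv(model.4) to replace the field by a Gaussian field $G_N$ up to a uniformly bounded additive error $E_N$ with exponential moments. For the random model this is exactly the input from \cite{SW16}, but for $\log|\zeta|$ on the critical line no such decomposition is available; the logarithm has singularities at the zeros, and the paper explicitly names this as the obstruction immediately after the conjecture. Every subsequent step in your outline --- the Gaussian tail expansion for the conditional first moment, the Brownian-bridge/ballot estimate for $\E[W_{\alpha,N}-W^{\mathrm{tr}}_{\alpha,N}]$, and the scale-by-scale decoupling in the second moment --- uses a Gaussian approximation at \emph{every} intermediate scale $K<j\leq N$, which is precisely what one does not have for $\zeta$.

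The paper's own suggestion for attacking the conjecture is different from what you wrote: rather than a pointwise coupling like \eqv(model.3), one would try to get by with only one-point and two-point large-deviation comparisons to a Gaussian (in the spirit of Lemmas~\thv(lem.sec.2) and~\thv(Lem.sec.3)), using the zeta-function Gaussian comparison of \cite{ABBRS19}. Whether the barrier argument can be pushed through with only those moment comparisons, and not a uniform additive bound, is exactly the open content of the conjecture.
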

This would be consistent with the conjecture of Fyodorov \& Keating for the Lebesgue measure of high points, see Section 2.5 in \cite{FK14} 
One issue is that it is not obvious that a result akin to Equation \eqv(model.3) holds, mainly because of the singularities of $\log \zeta$ at the zeros.
One way around this would be to restrict to Gaussian comparison to one-point and two-point large deviation estimates. 
This seems doable in view of Lemmas \thv(lem.sec.2) and \thv(Lem.sec.3) and the Gaussian comparison theorem proved for the zeta function in \cite{ABBRS19}.

\subsection{Outline of the proof} The proof of Theorem \thv(THM.1) is based on a first and a second moment estimate and follow the global strategy proposed in \cite{GKS18} for branching Brownian motion. First, we prove convergence of a conditional first moment to the desired limiting object in Lemma \thv(Lem.first2). Its proof builds on results on the Gaussian comparison and convergence to Gaussian multiplicative chaos established in \cite{SW16}. Next, a localisation result is established in Lemma \thv(Lem.first.3). Finally, we turn to the proof of Proposition \thv(prop.sec) which is based on a second moment computation. We use a branching approximation similar to the one employed in \cite{ABH17}.
Using the obtained first and second moment estimates we are finally in the position to prove Theorem \thv(THM.1).

 \noindent\textbf{Acknowledgements.} Lisa Hartung and Nicola Kistler thank the Rhein-Main Stochastic group for creating an interactive research environment leading to this article.

\section{First moment estimates}
For $R\leq N$, we define $\FF_R$ to be the $\s$-algebra generated by $(\theta_{p})_{p\leq R}$.
 We will often condition on $\FF_R$ to fix the dependence on the small primes. The variance of $G_N(x)-G_R(x)$, $x\in [0,1]$ is by definition
 \be\Eq(Eqn.var)
\sigma_R^2(N) \equiv{\rm Var} (G_N(x)-G_R(x))=\frac{1}{2}\sum_{R<p\leq N} p^{-1}
 \ee
The prime number theorem, see e.g. \cite{montgomery-vaughan}, implies that the density of the primes goes like $(\log p)^{-1}$. More precisely, we have
 \be\Eq(first.11)
 \sigma_R^2(N)=\left\vert\sum_{R<p\leq N}^N p^{-1} -\frac{1}{2}(\log\log N-\log \log R) \right\vert=o(1)\ \ \text{as $N\to\infty$ and $R\to\infty$. }
 \ee

 It turns out that the non-trivial contribution to Theorem \thv(THM.1) comes from the small primes. 
  \begin{lemma}\TH(Lem.first2)
For $W_{\alpha,N}$ as in \eqv(result.1), we have for $0<\alpha<2$
\be\Eq(first.7)
 \lim_{R\to\infty}  \lim_{N\to\infty}\frac{ \E\left(W_{\alpha,N}\vert \FF_R\right)}{\E\left(W_{\alpha,N}\right)}
=M_{\a}  \quad \mbox{a.s.}
  \ee
\end{lemma}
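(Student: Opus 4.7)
The plan is to exploit the independence of $X_N-X_R$ from $\FF_R$ and reduce the conditional tail probability to a Gaussian computation via \eqv(model.3). I would proceed in the following steps.

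\textbf{Step 1 (Conditioning).} Since $X_N(x)-X_R(x)$ depends only on $(\theta_p)_{p>R}$, it is independent of $\FF_R$. With $\tau_N=(\alpha/2)\log\log N$,
\[
\E(W_{\alpha,N}\,|\,\FF_R) \;=\; \int_0^1 \P\!\bigl(X_N(x)-X_R(x)>\tau_N-y\bigr)\Big|_{y=X_R(x)}\,dx.
\]

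\textbf{Step 2 (Gaussian tail asymptotics).} By \eqv(model.3), $X_N-X_R=(G_N-G_R)+(E_N-E_R)$, with $E_N-E_R$ uniformly bounded in exponential moments by \eqv(model.4). A truncation/sandwich argument based on $|E_N-E_R|\le K$ reduces matters to the Gaussian tail $\P(G_N(x)-G_R(x)>\tau_N-y)$. Using the Mills ratio together with $\sigma_R^2(N)=\tfrac{1}{2}(\log\log N-\log\log R)(1+o(1))$ from \eqv(first.11), and expanding
\[
\frac{(\tau_N-y)^2}{2\sigma_R^2(N)} \;=\; (\alpha/2)^2\log\log N \,+\, (\alpha/2)^2\log\log R \,-\, \alpha y \,+\, o(1)
\]
as $N\to\infty$ for fixed $R$ and bounded $y$, one obtains
\[
\P\!\bigl(G_N(x)-G_R(x)>\tau_N-y\bigr) \;\sim\; \P\!\bigl(G_N(x)>\tau_N\bigr)\cdot(\log R)^{-\alpha^2/4}\cdot e^{\alpha y}.
\]

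\textbf{Step 3 (Assembly and the limit $N\to\infty$).} The same Gaussian comparison controls the unconditional probability $\P(X_N(x)>\tau_N)$, with the exponential--moment bound \eqv(model.4) ensuring that the multiplicative errors due to $E_N$ stay bounded and largely cancel in the ratio. Substituting $y=X_R(x)$ and integrating therefore yields
\[
\lim_{N\to\infty}\frac{\E(W_{\alpha,N}\,|\,\FF_R)}{\E(W_{\alpha,N})} \;=\; (\log R)^{-\alpha^2/4}\int_0^1 e^{\alpha X_R(x)}\,C_R(x)\,dx \quad\text{a.s.},
\]
where $C_R(x)$ is the residual correction stemming from $E(x)-E_R(x)$, uniformly bounded above and below in $R$ by \eqv(model.4).

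\textbf{Step 4 (The limit $R\to\infty$).} Because $E_R\to E$ uniformly a.s.\ by \eqv(model.3), one has $C_R\to 1$. Combined with $(\log R)^{\alpha^2/4}=\E e^{\alpha G_R(x)}(1+o(1))$ and the relation $X_R=G_R+E_R$, the right-hand side is asymptotically equal to $\int_0^1 e^{\alpha X_R(x)}/\E e^{\alpha X_R(x)}\,dx$, i.e.\ to the total mass of $M_{\alpha,R}$, which converges a.s.\ to $M_\alpha$ by \eqv(model.5).

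\textbf{Main obstacle.} The delicate point is Step 2: transferring the Gaussian tail asymptotic to $X_N$ uniformly in the shifted threshold $\tau_N-y$. The error $E_N-E_R$ is bounded in exponential moments but does not vanish, so it enters multiplicatively into the large-deviation probability. Justifying its removal, and the subsequent interchange of $\lim_{N\to\infty}$ with integration against the random realization of $X_R$, requires a careful truncation combined with dominated convergence based on \eqv(model.4), tracking how bounded multiplicative perturbations propagate through the Mills-ratio expansion.
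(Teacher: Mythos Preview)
Your outline follows the same route as the paper: condition on $\FF_R$, reduce to a Gaussian tail for $G_N-G_R$ via \eqv(model.3), expand the exponent, and recognize the ratio as (essentially) the total mass $M_{\alpha,R}$, which converges to $M_\alpha$ by \eqv(model.5). The expansion in Step~2 and the identification in Step~4 are exactly what the paper does.

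The one point where you diverge from the paper is in how you dispose of $E_N-E_R$. You propose a truncation on $\{|E_N-E_R|\le K\}$ backed by the exponential moments \eqv(model.4), and then carry a residual factor $C_R(x)$ through the limit. The paper instead uses the stronger input already contained in \eqv(model.3): $E_N\to E$ \emph{uniformly a.s.}, so for every $\e>0$ there is $R_0$ with $|E_N(x)-E_R(x)|<\e$ for all $N\ge R\ge R_0$ and all $x$, almost surely. This yields an immediate deterministic $\pm\e$ sandwich on the threshold, and hence bounds of the form $M_{\alpha,R}\,e^{\pm 2\alpha\e}(1+o(1))$ for the ratio, with no truncation, no dominated convergence, and no $C_R$. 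Your ``main obstacle'' thus evaporates once you use uniform a.s.\ convergence rather than \eqv(model.4); the exponential--moment bound is not needed here at all. Apart from this simplification, your argument and the paper's coincide.
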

\begin{proof}
We start by computing $\E\left(W_{\alpha,N}\vert \FF_R\right)$. Using  Fubini's Theorem we can write the left-hand side of \eqv(first.7) as 
\bea\Eq(first.8)
&&\int_0^1 \P\left(X_N(x)>\frac{\alpha}{2} \log \log N\Big\vert \FF_R\right) dx\\
&=& \int_0^1 \P\left( G_N(x)-G_R(x) +(E_N(x)-E_R(x))> \frac{\alpha}{2} \log \log N - G_R(x)-E_R(x)\Big|\FF_R\right)dx,\nonumber
\eea
where we used \eqv(model.3). Moreover, again for each $\e>0$ there is $R_0$ such that for all $R\geq R_0$ $|E_{R}(x)-E_{N}(x)|<\e$ almost surely and uniformly in $x$. Hence, we can again upper bound \eqv(first.8) by 
\be\Eq(first.9)
  \int_0^1 \P\left( G_N(x)-G_R(x)> \frac{\alpha}{2} \log \log N - G_R(x)-E_R(x)-\e)\Big|\FF_R\right)dx,
\ee
 and a corresponding lower by replacing $\e$ by $-\e$. Next, observe that by definition of $X_N(x)$ and $E_N(x)$, $G_N(x)-G_R(x)$ are independent of $\FF_R$. We have that the probability in \eqv(first.9) is bounded from above by
 \bea \Eq(first.10)
 &&\frac{\s_R(N)}{\sqrt{2\pi}(\alpha \log \log N - G_R(x)-E_R(x)-\e)}
 \exp\left(-\frac{(\frac{\alpha}{2} \log \log N - G_R(x)-E_R(x)-\e)^2}{ 2\sigma_R^2(N)}\right)\nonumber\\
 &&= \frac{\sigma_R(N)}{\sqrt{2\pi}(\alpha \log \log N )}
 \exp\left(- \frac{\a^2 (\log\log N)^2}{8\sigma_R^2(N)}+\alpha (E_R(x)+G_R(x)+\e)\right) (1+o(1)),
 \eea

 Next, we turn to $\E\left(W_{\alpha,N}\right)$.
 We have that 
 \bea\Eq(first.12)
 &&\E\left(W_{\alpha,N}\right)=\mathbb{E}\left(  \E\left(W_{\alpha,N}\vert \FF_R\right)\right)
 \leq\frac{\s_r(N)}{\sqrt{2\pi}(\frac{\alpha}{2}\log \log N)  }
 \exp\left(- \frac{\a^2 (\log\log N)^2}{8\s_R^2(N)}\right)\nonumber\\
 &&\hspace{5cm}\times\int_0^1 \mathbb{E}\left(\exp\left(\alpha (E_R(x)+G_R(x)-\e)\right)\right)dx (1+o(1))
 \eea
 A corresponding lower bound we obtain by replacing $\e$ by $-\e$.
 Taking the quotient of \eqv(first.10) and and \eqv(first.12) and integrating with respect to $x$ we get
 \bea\Eq(first.12.2)
 && \hspace{-2cm}\frac{\int_0^1 \exp\left(\alpha (E_R(x)+G_R(x)+\e)\right)}{\int_0^1\mathbb{E}\left(\exp\left(\alpha (E_R(x)+G_R(x)-\e)\right)\right)dx} (1+o(1))\nonumber\\
\qquad&\leq&\frac{ \E\left(W_{\alpha,N}\vert \FF_R\right)}{\E\left(W_{\alpha,N}\right)}
 \leq \frac{\int_0^1 \exp\left(\alpha (E_R(x)+G_R(x)-\e)\right)}{\int_0^1\mathbb{E}\left(\exp\left(\alpha (E_R(x)+G_R(x)+\e)\right)\right)dx}(1+o(1)),
 \eea
 Pulling the terms involving $\e$ out of the integral and noting the normalization of $M_{\a,R}$ is chosen such that  $\E M_{\a,R}=1 $ and noting that
 \be\Eq(first.13)
 \E\left(  \frac{\int_0^1 \exp\left(\alpha (E_R(x)+G_R(x))\right)dx}{\int_0^1\mathbb{E}\left(\exp\left(\alpha (E_R(x)+G_R(x))\right)\right)dx}\right)=1,
 \ee
  we can rewrite \eqv(first.12) as
  \be\Eq(first.14)
    M_{\a,R}e^{2\a\e}(1+o(1))
\leq\frac{ \E\left(W_{\alpha,N}\vert \FF_R\right)}{\E\left(W_{\alpha,N}\right)}\leq M_{\a,R}e^{-2\a\e}(1+o(1)).
  \ee
 Note that \eqv(first.14) holds for all $\e>0$. When  taking $N,R\uparrow \infty $ $M_{\a,R}$ converges a.s. to $M_\alpha$ hence we have a.s.
 \be\Eq(first.15)
    M_{\a}e^{2\a\e}(1+o(1))
\leq\liminf_{N,r\to\infty}\frac{ \E\left(W_{\alpha,N}\vert \FF_R\right)}{\E\left(W_{\alpha,N}\right)}
\leq \limsup_{N,r\to\infty}\frac{ \E\left(W_{\alpha,N}\vert \FF_R\right)}{\E\left(W_{\alpha,N}\right)}
\leq M_{\a}e^{-2\a\e}(1+o(1)).
  \ee
As \eqv(first.15) does not depend on $r$ and $N$ anymore, we can take the limit as $\e\to 0$ and obtain 
 \be\Eq(first.16)
 \lim_{R\to\infty} \lim_{N\to\infty}\frac{ \E\left(W_{\alpha,N}\vert \FF_R\right)}{\E\left(W_{\alpha,N}\right)}
=M_{\a} \ .
 \ee
 \end{proof}
 
Next, we want to control
\be\Eq(first.17)
W_{\a,N}^{>}=\Leb\{x\in[0,1]: X_N(x)\geq \alpha\log\log N; \exists k\in\left[R ,N\right]:\; X_k(x)>(\alpha+\epsilon)\log\log k\}\ .
\ee
The idea is that, at high points, the value $X_N(x)$ is most likely shared equally by the increments as defined in \eqv(second.1) below.

\begin{lemma}\TH(Lem.first.3)
For all $\epsilon>0$ there exists $R_0$ such that for all $R=o(N)$ and $R,N>R_0$ such that for all  $c>0$
\be\Eq(first.18)
\P\left(W_{\a,N}^{>}>c\E W_{\a,N}\right)\leq e^{-\e r},
\ee
where $r=\ln\ln R $.
\end{lemma}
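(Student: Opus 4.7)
The plan is to apply Markov's inequality,
\begin{equation*}
\P\bigl(W_{\a,N}^{>}>c\,\E W_{\a,N}\bigr)\leq \frac{1}{c}\cdot\frac{\E W_{\a,N}^{>}}{\E W_{\a,N}},
\end{equation*}
so the task reduces to showing that the ratio $\E W_{\a,N}^{>}/\E W_{\a,N}$ is exponentially small in $r$. By Fubini's theorem and a union bound over intermediate scales $k\in[R,N]$ (reduced via a Doob-type maximal inequality for the martingale $k\mapsto G_k(x)$ together with \eqv(model.4) to a sparse grid $\Lambda=\{k:\log\log k\in\mathbb{N}\}\cap[R,N]$, at the cost of a small worsening $\e\to\e'<\e$ of the intermediate threshold),
\begin{equation*}
\E W_{\a,N}^{>}\;\leq\;\sum_{k\in\Lambda}\int_0^1 \P\bigl(X_N(x)\geq\alpha\log\log N,\;X_k(x)>(\alpha+\e')\log\log k\bigr)\,dx.
\end{equation*}

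The second step is to replace $X$ by $G$: by \eqv(model.3)--\eqv(model.4), the difference $X_k-G_k$ is uniformly sub-Gaussian, so it is absorbed into a further deterministic $O(1)$ shift of the thresholds. This reduces each term to a joint Gaussian tail estimate. Writing $t=\log\log k$ and $T=\log\log N$, and using that $G_k(x)$ and $G_N(x)-G_k(x)$ are independent centered Gaussians with variances $\tfrac12 t$ and $\tfrac12(T-t)$ (up to the vanishing error \eqv(first.11)), condition on $G_k(x)=u\geq(\alpha+\e')t$. The integrand is maximised at the constraint boundary, because the unconstrained saddle of $u^2/t+(\alpha T-u)^2/(T-t)$ in $u$ sits at $u=\alpha t<(\alpha+\e')t$. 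A direct computation then yields the two-point bound
\begin{equation*}
\P\bigl(G_N(x)>\alpha T,\;G_k(x)>(\alpha+\e')t\bigr)\;\leq\;\frac{C_{\a,\e}}{T}\exp\!\left(-\alpha^2 T-\frac{(\e')^2 tT}{T-t}\right),
\end{equation*}
the last factor being the ``ballot'' cost of deviating from the straight-line-optimal profile $u=\alpha t$.

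Dividing by $\E W_{\a,N}$, whose asymptotics are recorded in the proof of Lemma \thv(Lem.first2), and summing the per-scale contributions over $k\in\Lambda$ gives a geometric series in $t\geq r$ dominated by its first term; this produces a bound $\leq C_\e\exp(-c_\e r)$ for an explicit $c_\e>0$, which for $R$ sufficiently large (and after the Markov factor $1/c$ has been absorbed into the exponential) is subsumed by the claimed $e^{-\e r}$. The main obstacle is the first step: the reduction from the continuous ``$\exists k\in[R,N]$'' event to a union bound over the sparse grid $\Lambda$ must not destroy the exponential gain coming from the two-point estimate. This is handled by exploiting the independent-increment structure of $G_k$ and by using \eqv(model.4) to uniformly control $E_k$; the maximal fluctuation of $G_k(x)$ within a single scale is then bounded via Doob with at most polynomial-in-$T$ loss, which is negligible compared to the exponential factor $e^{-c_\e r}$.
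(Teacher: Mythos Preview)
Your proposal is essentially correct but follows a different route from the paper. After the common first step (Markov plus replacing $X$ by $G$ via \eqv(model.3)--\eqv(model.4)), the paper does \emph{not} perform a union bound over scales. Instead it conditions on the endpoint $G_N(x)-G_R(x)=y$ and observes that the centred process
\[
B_K(x)=G_K(x)-G_R(x)-\frac{\sigma_R^2(K)}{\sigma_R^2(N)}\,(G_N(x)-G_R(x))
\]
is (up to the time change $K\mapsto\sigma_R^2(K)$) a Brownian bridge, independent of the endpoint. The event ``$\exists K\in[R,N]:\,G_K(x)-G_R(x)>(\alpha+\e)\log\log K-\ldots$'' then becomes a single line-crossing event for the bridge, whose probability is evaluated exactly by the classical formula $\exp(-2\,l(0)\,l(\sigma_R^2(N))/\sigma_R^2(N))$. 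Integrating against the endpoint density reproduces a factor $e^{-c\e^2 r}$ without any summation.

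Your approach trades the bridge formula for a scale-by-scale two-point Gaussian estimate and a geometric sum; it is more elementary in that it avoids the bridge machinery, and your key computation (the constrained saddle at $u=(\alpha+\e')t$ giving the extra cost $(\e')^2\,tT/(T-t)$) is exactly right. The one place that needs care in your write-up is the discretisation from the continuous ``$\exists k\in[R,N]$'' to the sparse grid $\Lambda$: the Doob/maximal step you invoke must be applied \emph{jointly} with the endpoint event $\{G_N>\alpha T\}$, otherwise the residual ``large intra-window fluctuation'' event is not automatically small relative to $\E W_{\alpha,N}\asymp e^{-\alpha^2 n/4}$ when $j\ll n$. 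One clean way to do this is to keep the endpoint constraint and bound the intra-window maximum using independence of $\sup_{k\in[k_j,k_{j+1}]}(G_k-G_{k_j})$ from $(G_{k_j},G_N-G_{k_{j+1}})$. The paper's bridge argument sidesteps this issue entirely, which is its main advantage.
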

\begin{proof}
We want to use Markov's inequality to bound the probability on \eqv(first.18). Hence, we need to bound $\E W_{\a,N}^{>}$ from above. First, we bound $\E\left( W_{\a,N}^{>}\vert\FF_R\right)
$ from above by
\bea\Eq(first.19)
&&\int_0^1  \P\Big (\left\{G_N(x)-G_R(x)\geq \alpha\log\log N-X_R(x)-\e'\right\} \\
&&\hspace{1cm}\cap\left\{\exists K\in\left[R,N\right]:\; G_K(x)-G_R(x)>(\alpha+\epsilon)\log\log K-X_R(x)-\e'\right\}\Big\vert \FF_R\Big),\nonumber
\eea
where we used \eqv(model.3) and the fact that $E_R(x)$ converges a.s.~uniformly to a continuous function $E(x)$. Hence, for all $\epsilon'>0$ there is $R_0$ such that for all $K\geq R_0$ and all $x$ we have $\vert E_K(x)-E_R(x)\vert <\e'$.

Similarly as in \eqv(Eqn.var), the variable $G_K(x)-G_R(x)$ is Gaussian with mean $0$ and variance  
$$
\sigma_R^2(K)=\frac{1}{2}\sum_{R<p\leq K} p^{-1}\ .
$$
 Let
\be\Eq(first.20)
B_K(x)= G_K(x)-G_R(x)-\frac{\sigma_R^2(K)}{\sigma_R^2(N)}(G_N(x)-G_R(x)),
\ee
then $(B_K(x))_{K=1}^N$ are points on a time-changed brownian bridge from zero to zero in time $\sigma_R(N)^2$. 
As a Brownian bridge is independent from its endpoint, Equation \eqv(first.19) is equal to
\bea\Eq(first.21)
&&\int_0^1 \int_{\frac{\alpha}{2} \log \log N-X_R(x)-\e'}^{\infty}\P(G_N(x)-G_R(x)\in dy)\\
&& \qquad \times \P\left(\exists K\in\left[R ,N\right]:\; B_K(x)>\frac{\alpha+\epsilon}{2}\log\log K-X_R(x)-\e'-\frac{\sigma_R^2(K)}{\sigma_R^2(N)} y\ \Big\vert \FF_R \right)dx \nonumber
\eea
  as $\vert \sum_{p\leq K} p^{-1}-\log\log K\vert<C$. Let $r=\log \log R$ and $n=\ln\ln N$.   
  Next, let us control the probability that $X_{R}(x)$ is too large. 
  \be\Eq(Lisa.103)
  \P\left(X_{R}(x)\geq \frac{\e r}{3}\right)\leq \P\left(G_{R}(x)\geq \frac{\e r}{4}\right) + \P\left(E_{R}(x)\geq \frac{\e r}{12}\right).
  \ee 
  The second probability in \eqv(Lisa.103) is bounded by $Ce^{-\frac{\e r}{12}}$ by \eqv(model.4). For the first probability in \eqv(Lisa.103) is bounded by $Ce^{-\frac{\e r}{32}}$ by Gaussian tail asymptotics and the variance estimate for $G_R(x)$ for $r$ large enough and uniformly in $x$.
  On the event that $\{X_{R}(x)\leq \frac{\e r}{3}\}$ we can bound 
the second probability in \eqv(first.21)  from above by
\be\Eq(first.22)
 \P\left(\exists s\in\left[0,\sigma_{r}^2(N)\right]:\; b(s)> (\a+\epsilon)\left((s+\s_0(R)^2)  -C\right)-\frac{\e r}{3}-\e'-\frac{s}{\sigma_{r}^2(N)} y\right),
\ee
where $b(s)$ is a Brownian bridge from zero to zero in time $\sigma_{r}^2(N)$.
Consider the line $l$ from $(0,\frac{\e}{6}r)$ to $\left(\sigma_{r}^2(N),(\a+\e)(n/2-C)-\e'-y\right)$. One checks that $l(s)\leq (\a+\epsilon)\left((s+\s_0(R)^2)  -C\right)-\frac{\e r}{3}-\e'-\frac{s}{\sigma_{r}^2(N)} y$ for all $r$ large enough.
The probability of Brownian bridge to stay under a linear function is well known, see e.g.,   Lemma 2.2 in \cite{B_C},  
\be\Eq(Lisa.104)
\P\left(\exists s\in\left[0,\sigma_{r}^2(N)\right]:\; b(s)> l(s)\right)
=\exp\left(-2\frac{l(0)l\left(\sigma_{r}^2(N)\right)}{\sigma_{r}^2(N)}\right)
\ee
Hence, on the event  we can bound the expectation of \eqv(first.21) by
\be\Eq(Lisa.105)
\int_0^1 \E\int_{\frac{\alpha}{2} \log \log N-X_r(x)}^{\infty}\hspace{-0.1cm}\P(G_N(x)-G_R(x)\in dy) e^{-\frac{\e r(\a/2+\e/2)n-(\a+\e)C-\e'-y)}{3/2(n-r)}}(1+o(1)) dx +Ce^{-\frac{\e r}{32}} \E(W_{\a,N})
\ee
Using the Gaussian tail asymptotics for $G_N(x)-G_R(x)$ together with \eqv(first.12), Equation \eqv(Lisa.105) is bounded above by
\be\Eq(lisa.106)
\E(W_{\a,N})e^{-r\frac{\e^2}{6}+o(r)}.
\ee
This implies the claim of Lemma \thv(Lem.first.3).
\end{proof}

\section{Branching approximation and second moment estimates}

  \subsection{Definition of the increments} 
The goal is to use a branching approximation similar to \cite{ABH17} to compute the necessary second moments. 
To this end, we define for $k\in\N$ and $x\in (0,1)$
\be\Eq(second.1)
Y_k(x)=\sum_{e^{k-1}<\log p \leq e^k}\frac{1}{2\sqrt{p_j}} \left(W_j^{(1)}\cos(x\log p_j)  +W_j^{(2)}\sin(x\log p_j)\right).
\ee
By definition, we have
\be\Eq(second.2)
G_N(x)=\sum_{k=1}^{n} Y_k(x), 
\ee
where for the rest of the section we set $n\equiv \ln\ln N$. 
The increments $Y_k$ are such that
\be\Eq(second.3)
 \rho_k(x,x')\equiv \E(Y_k(x)Y_k(x'))= \sum_{e^{k-1}<\log p \leq e^k}\frac{1}{2p}\cos(|x-x'|\log p_j).
\ee
The covariances can be computed again by the prime number theorem. This is done in Lemma 2.1 in \cite{ABH17}.
It is convenient to state the result to introduce branching point of $x,x'\in (0,1)$ by
\be\Eq(second.4)
x\wedge x'\equiv \lfloor\log \vert x-x'\vert^{-1}\rfloor.
\ee
\begin{lemma}[Lemma 2.1 in \cite{ABH17}] \TH(Lem.sec.1)
For $k\geq 1$ and $x,x'\in(0,1)$ we have
\be\Eq(second.5)
\E(Y_k^2(x))=\frac{1}{2}+ O\left(e^{-c\sqrt{e^k}}\right),
\ee
and
\be\Eq(second.6)
 \rho_k(x,x')= \begin{cases}
\frac{1}{2} +O\left(e^{-2(x\wedge x'-k)}\right)+ O\left(e^{-c\sqrt{e^k}}\right) & \mbox{if } k\leq x\wedge x',\\
O\left(e^{-(k-x\wedge x')}\right) & \mbox{if }k> x\wedge x'
\end{cases} 
\ee
\end{lemma}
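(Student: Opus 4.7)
My plan is a direct computation using Mertens' theorem in the sharp form provided by the Prime Number Theorem (classical zero-free region), combined with Abel summation and a one-dimensional oscillatory integral estimate. The starting point is that, since $W_j^{(1)},W_j^{(2)}$ are i.i.d.\ standard normals, the trigonometric identity $\cos a\cos b+\sin a\sin b=\cos(a-b)$ produces at once, from definition \eqv(second.1), an identity of the shape \eqv(second.3), so both parts of the lemma reduce to estimating
\be
S_k(t):=\sum_{e^{k-1}<\log p\le e^k}\frac{\cos(t\log p)}{p},\qquad t=|x-x'|,
\ee
with \eqv(second.5) being the special case $t=0$.

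For \eqv(second.5) I would apply Mertens' theorem in the form $\sum_{p\le y}p^{-1}=\log\log y+M+O(e^{-c\sqrt{\log y}})$ (which follows from the PNT via partial summation) at $y=e^{e^k}$ and at $y=e^{e^{k-1}}$; taking the difference gives $S_k(0)=1+O(e^{-c\sqrt{e^k}})$, and the stated constant is recovered after multiplication by the prefactor in \eqv(second.3).

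For \eqv(second.6) I would pass from $S_k(t)$ to an integral via Abel summation against the PNT density $1/\log u$, and then change variables $v=\log u$ to reduce the main term to
\be
I_k(t):=\int_{e^{k-1}}^{e^k}\frac{\cos(tv)}{v}\,dv,
\ee
up to a PNT error of size $O(e^{-c\sqrt{e^k}})$ uniformly in $t$. Write $\ell:=x\wedge x'=\lfloor\log(1/t)\rfloor$, so $t\asymp e^{-\ell}$. If $k\le\ell$, then $tv\le e^{k-\ell}\le 1$ on the whole range of integration, and Taylor-expanding $\cos(tv)=1+O((tv)^2)$ and integrating against $dv/v$ yields $I_k(t)=1+O(e^{-2(\ell-k)})$. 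If $k>\ell$, substituting $w=tv$ transforms $I_k(t)$ into $\int_{te^{k-1}}^{te^k}\frac{\cos w}{w}\,dw$ with lower endpoint $\gtrsim e^{k-\ell-1}\ge 1$; a single integration by parts (integrating $\cos w$, differentiating $1/w$) then gives $I_k(t)=O(1/(te^{k-1}))=O(e^{-(k-\ell)})$.

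The only delicate step is keeping the Abel summation error under control uniformly in the oscillation parameter $t$: one needs the boundary contributions together with the error $\sup_{y\le e^{e^k}}|\pi(y)-\mathrm{Li}(y)|/y$ to be absorbed into the $O(e^{-c\sqrt{e^k}})$ already present in the statement. The classical zero-free region for $\zeta$ supplies exactly this bound on the PNT remainder, so this is not a genuine obstacle; the entire argument is essentially an estimate of an oscillatory integral against $1/v$, which either loses its oscillation (case $k\le\ell$) or is controlled by the inverse phase at the lower endpoint (case $k>\ell$).
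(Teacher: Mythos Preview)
The paper does not actually prove this lemma: it is quoted verbatim as Lemma~2.1 of \cite{ABH17} and simply invoked. So there is no ``paper's own proof'' to compare against beyond the reference.

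Your argument is correct and is essentially the standard one (and, as far as one can tell from the exposition in \cite{ABH17}, the same one used there). Two minor remarks. First, in the case $k=\ell+1$ the lower endpoint $te^{k-1}$ can be slightly below $1$ (since $t\asymp e^{-\ell}$ only up to a bounded multiplicative constant), so the integration-by-parts bound reads $O((te^{k-1})^{-1})=O(e^{-(k-\ell)})$ with an absolute constant absorbed into the $O$; you have this right in spirit but the ``$\gtrsim 1$'' claim is not literally needed. Second, for the Abel summation remainder it is worth recording explicitly that the weight $f(u)=\cos(t\log u)/u$ satisfies $|f(u)|\le 1/u$ and $|f'(u)|\le 2/u^2$ uniformly in $t\in[0,1]$, so the boundary and integrated error terms against $R(u)=\pi(u)-\mathrm{Li}(u)=O(ue^{-c\sqrt{\log u}})$ are all $O(e^{-c\sqrt{e^{k-1}}})$, uniformly in $t$; this justifies the ``not a genuine obstacle'' remark you make at the end.
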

There is a fast decoupling between the increments after the branching point where the distribution of $Y_k(x)$ and $Y_k(x')$ is very close to independent Gaussians with mean zero and variance $1/2$. We introduce a parameter $\Delta$ that gives some room before and after the branching point to ensure uniform estimates.

\begin{lemma}\TH(lem.sec.2) Let $\Delta>0$. Let $x,x'\in(0,1)$ and $m>x\wedge x'+\Delta$. Then we have
\be\Eq(second.7)
\P\left(\sum_{k=m+1}^n Y_k(x) \in A, \sum_{k=m+1}^n Y_k(x') \in B\right)=\P\left(\sum_{k=m+1}^n Y_k\in A\right)\P\left(\sum_{k=m+1}^n Y_k\in B\right)\left(1+O(e^{-c\delta}\right),
\ee
where $(Y_i)_{i\in \N}$ are iid Gaussians with mean zero and variance $\sigma^2$.
\end{lemma}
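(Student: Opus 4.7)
The plan is to exploit joint Gaussianity and the exponentially fast decorrelation after the branching point established in Lemma \thv(Lem.sec.1). Setting $S_x := \sum_{k=m+1}^n Y_k(x)$ and $S_{x'} := \sum_{k=m+1}^n Y_k(x')$, I would first observe that, because each $Y_k(x)$ is by \eqv(second.1) a linear combination of the iid standard Gaussians $(W_j^{(i)})$, the pair $(S_x, S_{x'})$ is centered bivariate Gaussian. The statement therefore reduces to computing its two-by-two covariance matrix and comparing the corresponding joint law with the product of its marginals.

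For the covariance, \eqv(second.5) gives
\[
\var(S_x) = \var(S_{x'}) = \tfrac{n-m}{2} + O\bigl(e^{-c\sqrt{e^m}}\bigr),
\]
which matches the variance $(n-m)\sigma^2$ of the right-hand side (with $\sigma^2 = 1/2$) up to an error $O(e^{-c\sqrt{e^\Delta}})$ since $m > \Delta$. For the off-diagonal entry, every index in the range of summation satisfies $k > x\wedge x' + \Delta$, so the second case of \eqv(second.6) applies to each summand and
\[
\cov(S_x, S_{x'}) = \sum_{k=m+1}^n \rho_k(x,x') = \sum_{k=m+1}^n O\bigl(e^{-(k - x\wedge x')}\bigr) = O(e^{-\Delta})
\]
by summing a geometric series. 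Hence the correlation coefficient is $r = O(e^{-\Delta}/(n-m))$.

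With the covariance pinned down, I would compare densities directly. For centered bivariate Gaussians,
\[
\frac{f_{S_x, S_{x'}}(u,v)}{f_{S_x}(u)\, f_{S_{x'}}(v)} = (1-r^2)^{-1/2}\exp\bigl(Q_r(u,v)\bigr),
\]
where $Q_r$ is a quadratic form whose coefficients are $O(r)$ to leading order. On the bulk $|u|,|v| \leq C\sqrt{(n-m)\log(n-m)}$ the exponent $Q_r$ is $O(e^{-\Delta})$, while outside the bulk standard Gaussian concentration shows that both the joint and the product laws assign probability that is superexponentially small in $\Delta$. Integrating against $\1_A(u)\1_B(v)$ then produces the advertised multiplicative error $1 + O(e^{-c\Delta})$.

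The main obstacle is precisely this density-ratio step: the pointwise ratio is not uniformly bounded on $\R^2$, so a multiplicative statement valid for arbitrary Borel $A, B$ requires the bulk/tail split above rather than a clean total-variation or relative-entropy bound. Once that decomposition is organized, uniformity in $x, x'$ (the "room before and after the branching point" alluded to in the paper) is automatic: all estimates depend on the pair only through $x \wedge x'$, and the variance correction in \eqv(second.5) is likewise uniform.
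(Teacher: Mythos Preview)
Your proposal is correct and follows essentially the same route as the paper: identify $(S_x,S_{x'})$ as centered bivariate Gaussian, control its covariance via Lemma~\thv(Lem.sec.1), and then compare densities. The paper's own proof merely asserts this and defers the density computation to Eq.~(61) of \cite{ABH17}; your bulk/tail split is the standard way to carry that out and is in the same spirit as what is done there.
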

\begin{proof}
As$\left(\sum_{k=m+1}^n Y_k(x),\sum_{k=m+1}^n Y_k(x')\right)$ is a Gaussian process and its covariance is controlled in Lemma \thv(Lem.sec.1) it suffices to compare densities. This follows the same lines starting from Eq. (61) in \cite{ABH17} only that in our setting $\mu=0$.
\end{proof}
Before the branching point we want to show that $Y_k(x)$ and $Y_k(x')$ are almost fully correlated. This is specified in the lemma below.
\begin{lemma}\TH(Lem.sec.3)
Let $\Delta>0$. Let $x,x'\in(0,1)$ and $r<m<x\wedge x'-\Delta$. Then we have
\be\Eq(second.7)
\P\left(\sum_{k=r}^m Y_k(x) \in A, \sum_{k=r}^m Y_k(x') \in B\right)=\P\left(\sum^m_{k=r} Y_k\in A\cap B\right) \left(1+O(e^{-c\D}\right),
\ee
\end{lemma}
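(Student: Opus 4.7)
Following the density-comparison strategy used in Lemma \thv(lem.sec.2), I would reduce the claim to an estimate on the two-dimensional Gaussian vector $(S(x), S(x'))$, where $S(x) \df \sum_{k=r}^m Y_k(x)$. The first step is to compute its covariance matrix via Lemma \thv(Lem.sec.1). In the regime $r \leq k \leq m < x\wedge x' - \Delta$, each summand satisfies $\rho_k(x,x') = \tfrac{1}{2} + O(e^{-2(x\wedge x'-k)}) + O(e^{-c\sqrt{e^k}})$ and $\E(Y_k(x)^2) = \tfrac{1}{2} + O(e^{-c\sqrt{e^k}})$. Summing over $k$, both diagonal entries equal $(m-r+1)/2 + O(e^{-c\sqrt{e^r}})$, while the off-diagonal entry is $(m-r+1)/2 + O(e^{-2\Delta}) + O(e^{-c\sqrt{e^r}})$, since $\sum_{k \leq m} e^{-2(x\wedge x'-k)}$ is a geometric series whose last term is at most $e^{-2\Delta}$.

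Next I would perform the linear change of variables $(U,V) \df (S(x),\, S(x')-S(x))$, which decouples the regular and near-null directions of the Gaussian vector. Under this change, $\Var(U) = (m-r+1)/2 + O(e^{-c\sqrt{e^r}})$, $\Var(V) = O(e^{-c\Delta})$, and $\Cov(U,V) = O(e^{-c\Delta})$. Equivalently, the joint law of $(S(x),S(x'))$ is close to the perfectly correlated target $(G,G)$, where $G \laweq \sum_{k=r}^m Y_k$ has variance $(m-r+1)/2$.

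The probability comparison then proceeds along the lines of Eq.~(61) of \cite{ABH17}. Writing
\be
\P(S(x)\in A,\, S(x')\in B) = \E\bigl[\1_{U \in A}\, \P(U+V \in B \mid U)\bigr],
\ee
I would compare this to $\P(G \in A \cap B) = \E[\1_{U\in A}\1_{U\in B}]$. A Chebyshev bound on $\{|V|>\ve\}$ using $\Var(V) = O(e^{-c\Delta})$ handles the atypical event, while on $\{|V|\leq \ve\}$ the conditional probability differs from $\1_{U\in B}$ only by the mass the density of $U$ puts on the $\ve$-neighborhood of $\partial B$. For the intervals $A,B$ appearing in the applications, this boundary term is controlled by $\ve$ times the uniform bound on the density of $U$; tuning $\ve$ to a suitable power of $e^{-c\Delta}$ then gives the multiplicative error $1 + O(e^{-c\Delta})$. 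The main obstacle is precisely the near-singularity of the $2\times 2$ covariance matrix: its determinant is $O(e^{-c\Delta})$, so the joint density of $(S(x),S(x'))$ blows up along the diagonal and a pointwise comparison to the degenerate target density is ill-defined. The change of variables to $(U,V)$ sidesteps this by isolating the near-null direction, reducing the claim to the comparison of a non-degenerate one-dimensional Gaussian with a small independent Gaussian perturbation of itself.
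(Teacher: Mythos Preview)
Your approach is the natural fleshing-out of the paper's one-line proof, which simply invokes the Gaussian structure and the covariance estimates of Lemma~\thv(Lem.sec.1); the change of variables $(U,V)=(S(x),S(x')-S(x))$ isolating the near-null direction is exactly the right way to make the density comparison precise when the target law is degenerate on the diagonal.

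One small point: a Chebyshev bound on $\{|V|>\ve\}$ yields only an \emph{additive} error $\Var(V)/\ve^2$, which does not automatically upgrade to the stated multiplicative $(1+O(e^{-c\Delta}))$ for arbitrary $A,B$. Since $V$ is Gaussian with $\Var(V)=O(e^{-c\Delta})$, you should instead use the Gaussian tail $\P(|V|>\ve)\le 2\exp(-\ve^2/(2\Var V))$, and then, as you note, restrict to the half-lines that actually occur in the applications: there the target probability $\P(\sum_k Y_k\in A\cap B)$ is bounded below (in fact $\ge 1/2$ in \eqv(second.17) since the threshold is positive), so the additive errors become multiplicative. With that adjustment your argument goes through and matches the paper's intended proof.
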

\begin{proof}
As $G$ is a Gaussian process this follows from the density estimates in Lemma \thv(Lem.sec.1).
\end{proof}

\subsection{Second moment computation}
The main result of this section is:
\begin{proposition}\TH(prop.sec) There exists $\kappa_\a>0$  such that for $R=o(\log \log N)$ as $N\to \infty$ we have
\be\Eq(prop.sec.1)
\P\left(\left\vert\frac{W_{\a,N}-\E\left(W_{\a,N}\vert \FF_R\right)}{\E\left(W_{\a,N}\right)}\right\vert>c\right)\leq (1+o(1)) Ce^{-k_\a r},
\ee
where $r\equiv\ln\ln R$ and $C>0$ a constant depending on $c$.
\end{proposition}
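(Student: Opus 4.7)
The plan is to bound the left-hand side of \eqv(prop.sec.1) by a conditional Chebyshev inequality applied to $\Var(W_{\a,N}\vert\FF_R)$. Since for $\a\geq \sqrt{2}$ the unconstrained second moment of $W_{\a,N}$ is expected to diverge, one first localises to ``typical'' trajectories: write $W_{\a,N}=W_{\a,N}^{\leq}+W_{\a,N}^{>}$, where $W_{\a,N}^{\leq}$ is the mass of high points whose past trajectory never exceeds the barrier specified in \eqv(first.17). Lemma \thv(Lem.first.3) and its Markov-type proof give both $\P(W_{\a,N}^{>}>c\E W_{\a,N})\leq e^{-\e r}$ and $\E W_{\a,N}^{>}\leq e^{-\e^2r/6}\E W_{\a,N}$, so up to such errors it is enough to show
\be
\E\bigl[\Var(W_{\a,N}^{\leq}\vert\FF_R)\bigr]\leq C e^{-\k_\a r}(\E W_{\a,N})^2,
\ee
from which \eqv(prop.sec.1) follows by conditional Chebyshev, absorbing the $W_{\a,N}^{>}$ error into the $(1+o(1))$ factor.

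The conditional variance is an integral of $\P(x,x'\ \text{both good}\vert\FF_R)-\P(x\ \text{good}\vert\FF_R)\P(x'\ \text{good}\vert\FF_R)$ over $(x,x')\in[0,1]^2$, which I split according to the branching time $\ell=x\wedge x'=\lfloor\log\vert x-x'\vert^{-1}\rfloor$. Fix a room parameter $\D$ (to be chosen proportional to $r$). For $\ell\leq r-\D$, Lemma \thv(lem.sec.2) decouples the increments $Y_k(x),Y_k(x')$ at scales $k>r$, so conditional on $\FF_R$ the two ``good'' events factorise up to a multiplicative $1+O(e^{-c\D})$; after taking expectations this regime contributes at most $C e^{-c\D}(\E W_{\a,N})^2$. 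The genuine contribution to $\Var(W_{\a,N}^{\leq}\vert\FF_R)$ then comes from pairs with $\ell\in(r-\D,n]$, for which one simply discards the negative subtraction and bounds the integrand by $\P(x,x'\ \text{both good}\vert\FF_R)$.

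For such pairs, Lemma \thv(Lem.sec.3) identifies the two trajectories up to level $\ell$ with a single Gaussian walk, while Lemma \thv(lem.sec.2) gives conditional independence of the two increments beyond level $\ell+\D$. Decomposing $X_N(\cdot)=X_\ell(\cdot)+(X_N(\cdot)-X_\ell(\cdot))$, the barrier forces the shared part $X_\ell$ to lie below a line of slope $(\a+\e)/2$, so each of the two conditionally independent tails to be traversed beyond the branching point has Gaussian cost of order $\exp(-(\tfrac{\a}{2}n-X_\ell)^2/(n-\ell))$. Combining the one-dimensional Gaussian tail for $X_\ell$, the Ballot-type correction coming from the barrier, the squared tail for the independent parts beyond $\ell$, and the spatial volume $e^{-\ell}$ of pairs with $x\wedge x'=\ell$, bookkeeping in the spirit of \cite{ABH17, GKS18} yields an integrand bounded by $e^{-\k_\a(\ell-r)}(\E W_{\a,N})^2$ for some $\k_\a>0$ depending only on $\a\in(0,2)$. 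Summing this geometric series over $\ell\in(r,n]$ closes the second moment estimate, and the proposition follows.

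The hard part is the barrier estimate in the range $\sqrt{2}\leq\a<2$, where without the truncation to $W_{\a,N}^{\leq}$ the two-point Gaussian integral diverges. Quantifying the entropic-repulsion gain that comes from restricting to trajectories below the line in \eqv(first.17), uniformly in $\ell\in(r,n)$ and without deterioration as $\ell\to r^+$ or $\ell\to n^-$, is where the explicit rate $\k_\a>0$ is extracted; this is the step that most directly constrains the allowed range of $\a$. Once this Ballot estimate is in hand, the contributions of the error field $E_N-E_R$ (controlled by the uniform exponential moments \eqv(model.4)), of the exchange between $W_{\a,N}$ and $W_{\a,N}^{\leq}$, and of the factorisation errors from Lemmas \thv(lem.sec.2)--\thv(Lem.sec.3) all collapse into the $(1+o(1))$ factor in \eqv(prop.sec.1).
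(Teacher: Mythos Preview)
Your overall strategy matches the paper's: localise via the barrier (Lemma~\thv(Lem.first.3)), write the conditional second moment of $W_{\a,N}^{\le}$ as a double integral, split according to the branching scale $\ell=x\wedge x'$, and use Lemmas~\thv(lem.sec.2)--\thv(Lem.sec.3) for decoupling. The analysis you sketch for $\ell>r$ --- merge the two walks up to $\ell$, decouple beyond, combine the volume $e^{-\ell}$ with the squared Gaussian tail and the barrier --- is the content of the paper's Lemma~\thv(Lem.sec4) (regions $(I)$--$(III)$), and the geometric summation you describe is exactly how the rate $\k_\a$ emerges there.

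The gap is in your treatment of the region $\ell\le r-\D$. The conditional factorisation via Lemma~\thv(lem.sec.2) gives a contribution to $\Var(W_{\a,N}^{\le}\vert\FF_R)$ of order $e^{-c\D}\bigl(\E[W_{\a,N}^{\le}\vert\FF_R]\bigr)^2$, so after averaging you obtain $e^{-c\D}\,\E\bigl[(\E[W_{\a,N}^{\le}\vert\FF_R])^2\bigr]$, not $e^{-c\D}(\E W_{\a,N})^2$ as you assert. For $\a<\sqrt 2$ these agree up to constants since $M_{\a,R}\in L^2$ uniformly in $R$, but for $\sqrt 2\le\a<2$ one has $\E[M_{\a,R}^2]\asymp e^{(\a^2/2-1)r}$ (and $\asymp r$ at $\a=\sqrt 2$), so your bound is off by this diverging factor. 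Taking $\D\propto r$ as you propose could in principle absorb it, but then the crude window estimates around the branching point (of the type $e^{C'\D^2}$ in \eqv(second.15)--\eqv(second.16)) blow up, and you give no indication how to refine them; you also silently drop the range $\ell\in(r-\D,r]$, which for $\D\propto r$ is no longer a thin strip. The paper sidesteps the whole issue: it keeps $\D$ fixed and handles the far region $(IV)$ by an \emph{almost-sure} comparison (Lemma~\thv(lem.four)) of $\E[(IV)\vert\FF_R]$ with $(\E[W^{\le}_{\a,N}\vert\FF_R])^2$, exploiting that $M_\a$ is a.s.\ finite even when $M_\a\notin L^2$; only the near regions $(I)$--$(III)$ are controlled in expectation (Lemma~\thv(Lem.sec4)), where fixed $\D$ renders the $\D$-dependent prefactors harmless constants.
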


  To prove Proposition \thv(prop.sec) we essentially need to control the second moment of 
$$
W_{\a,N}^{\leq}=\Leb\Big\{x\in[0,1]: \sum_{j\leq n}Y_j(x)\geq \a n/2; \forall k\in[2r,n]:\;\sum_{j\leq k}Y_j(x)\leq(\alpha+\epsilon)k/2\Big\}.
$$
\begin{remark}
Throughout the proof we restrict our computations to $R$ and $N$ such that $r=\ln\ln R$ and $n=\ln\ln N$ are natural numbers. The general case follows in the same way by considering the last resp. first summands in the representation in \eqv(second.2) of $G_N$ separately. The desired estimates carry over by minor adjustments but would require a more involved notation. To keep the computations that follow as clear as possible and not to burden the reader with heavier notations we restrict ourselves to the case where $r,n\in\mathbb{N}$.
\end{remark}

Indeed, Markov's inequality and Lemma \thv(Lem.first.3) imply
\be\Eq(sec2)
\P\left(\left\vert\frac{W_{\a,N}-\E\left(W_{\a,N}\vert \FF_R\right)}{\E\left(W_{\a,N}\right)}\right\vert>c\right)
\leq\P\left(\frac{\left(W^{\leq}_{\a,N}-\E\left(W^\leq_{\a,N}\ \vert\FF_R\right)\right)^2}{\E\left(W_{\a,N}\right)^2} >c^2/4\right)+ Ce^{-R c(\epsilon)}.
\ee

Clearly, we have
\be
(W^{\leq}_{\a,N})^2
= \Leb^{\times 2}\{x,x'\in[0,1]:\forall y\in\{x,x'\} \ \sum_{k\leq n}Y_k(y)>\frac{\alpha}{2}n, \forall k\in[r,n] \ \sum_{j\leq k}Y_j(y)\leq \frac{\alpha+\epsilon}{2}k \}
\ee
Let $0<\D<r$. 
We divide the right side into four terms depending on the branching point:
$$
(I): x\wedge x'> n-\D \quad (II): r+\D<x\wedge x'\leq n-\D \quad (III): r-\D<x\wedge x'\leq r+\D \quad (IV):x\wedge x'\leq r-\D\ .
$$

The term $(IV)$ is controlled in the following Lemma.
\begin{lemma}\TH(lem.four) For $R=o(\log\log N)$ we have
\be\Eq(four)
\lim_{\D\to\infty}\lim_{N\to\infty}\frac{\E((IV)\vert\FF_R)-\left(\E\left(W^{\leq}_{\a,N}\vert \FF_R\right)\right)^2}{\E\left(W_{\a,N}\right)^2}=0\quad \mbox{ a.s.}
\ee
\end{lemma}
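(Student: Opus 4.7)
The plan is to split the difference algebraically and bound each piece separately. Write $p(x)\equiv\P(x\in W_{\a,N}^\leq\vert \FF_R)$ and $p(x,x')\equiv\P(\{x,x'\}\subset W_{\a,N}^\leq\vert \FF_R)$. Then
\begin{equation*}
\E((IV)\vert \FF_R)-\bigl(\E(W_{\a,N}^\leq\vert\FF_R)\bigr)^2 \;=\; \underbrace{\iint_{(IV)}\bigl(p(x,x')-p(x)p(x')\bigr)dx\,dx'}_{=:(A)}\;-\;\underbrace{\iint_{(IV)^c}p(x)p(x')\,dx\,dx'}_{=:(B)},
\end{equation*}
and it suffices to show that both $(A)/\E(W_{\a,N})^2$ and $(B)/\E(W_{\a,N})^2$ tend to $0$ a.s.\ under $\lim_{\Delta\to\infty}\lim_{N\to\infty}$.

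On $(IV)=\{x\wedge x'\leq r-\Delta\}$ every level $k>r$ satisfies $k-x\wedge x'>\Delta$, so by Lemma \thv(Lem.sec.1) the cross-covariances $\cov(Y_k(x),Y_k(x'))$ are $O(e^{-(k-x\wedge x')})$ and sum to $O(e^{-\Delta})$. Conditioning on $\FF_R$ fixes the levels $k\leq r$, and the events $\{x\in W^\leq\},\{x'\in W^\leq\}$ depend only on the tails $(Y_k(x))_{k>r},(Y_k(x'))_{k>r}$ through the endpoint inequality and the barrier on $[2r,n]$; a multivariate extension of the Gaussian density-comparison argument behind Lemma \thv(lem.sec.2) therefore yields $p(x,x')=p(x)p(x')(1+O(e^{-c\Delta}))$ uniformly on $(IV)$. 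Combined with Lemma \thv(Lem.first2) and with Lemma \thv(Lem.first.3) (which controls the discrepancy between $\E(W^\leq_{\a,N}\vert\FF_R)$ and $\E(W_{\a,N}\vert\FF_R)$), this gives
\begin{equation*}
\frac{|(A)|}{\E(W_{\a,N})^2}\leq O(e^{-c\Delta})\left(\frac{\E(W^\leq_{\a,N}\vert\FF_R)}{\E(W_{\a,N})}\right)^2\xrightarrow{N\to\infty}O(e^{-c\Delta})\,M_\a^2\xrightarrow{\Delta\to\infty}0\quad\text{a.s.}
\end{equation*}

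For $(B)$, the domain is a diagonal strip of Lebesgue width $O(e^{-(r-\Delta)})$. Repeating the first-moment computation leading to \eqv(first.10)--\eqv(first.12) (now without the $E_R$ term since $W^\leq$ depends only on the Gaussian field) shows that $p(x)/\E(W_{\a,N})$ is, up to a uniform $(1+o(1))$-factor as $N\to\infty$ and an a.s.\ bounded multiplicative factor coming from $E_R$, the density of $M_{\a,R}(dx)$; hence
\begin{equation*}
\frac{(B)}{\E(W_{\a,N})^2}\;\leq\;C\iint_{\{|x-x'|<e^{-(r-\Delta)}\}}M_{\a,R}(dx)\,M_{\a,R}(dx')+o(1).
\end{equation*}
Because $R=o(\log\log N)$ forces $r\to\infty$ as $N\to\infty$, the indicator support shrinks to the diagonal while $M_{\a,R}\to M_\a$ a.s.\ as Radon measures; since the subcritical GMC $M_\a$ ($\a<2$) is a.s.\ non-atomic one has $\int M_\a(\{x\})\,M_\a(dx)=0$, so by monotone convergence $\iint_{\{|x-x'|<\delta\}}M_\a(dx)M_\a(dx')\to 0$ as $\delta\to 0$, and $(B)/\E(W_{\a,N})^2\to 0$ a.s.\ already in the limit $N\to\infty$ for fixed $\Delta$. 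The main obstacle I anticipate is precisely the rigorous form of this last step: it requires either a pair-correlation bound for $M_{\a,R}$ of the form $\iint_{|x-x'|<\delta}M_{\a,R}(dx)M_{\a,R}(dx')\leq C\delta^\gamma$ uniform in $R$, or an auxiliary truncation/dominated-convergence argument based on subcritical GMC moment estimates. Combining this with the control on $(A)$ yields \eqv(four).
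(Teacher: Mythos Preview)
Your algebraic split $(A)-(B)$ is correct and your treatment of $(A)$ is the same decoupling idea the paper uses. The real difference is that the paper never isolates your term $(B)$ at all. After invoking the decoupling of Lemma~\thv(lem.sec.2) on $(IV)$ (with the barrier dropped, which is harmless since $W^{\leq}_{\a,N}\leq W_{\a,N}$), the paper simply enlarges the domain of integration from $(IV)$ to all of $[0,1]^2$; the integrand is nonnegative, so this only increases the upper bound, and one lands directly on a quantity of the form $\bigl(\int_0^1\ldots\bigr)^2(1+O(e^{-c\Delta}))$, cf.\ \eqv(sec.6)--\eqv(sec.7). Both this upper bound and the matching lower bound for $\bigl(\E(W^{\leq}_{\a,N}\vert\FF_R)\bigr)^2$ are then identified with $M_{\a,R}^2\,\E(W_{\a,N})^2$ up to $e^{\pm 2\a\e}$ via Lemma~\thv(Lem.first2), so the normalized difference is controlled by the $\e$ and $e^{-c\Delta}$ errors alone. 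This completely sidesteps the diagonal integral $(B)$ and the GMC regularity input you flag as the main obstacle.

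Your route would also go through, but the obstacle you identify is genuine: you need a bound of the type $\iint_{|x-x'|<\delta}M_{\a,R}(dx)\,M_{\a,R}(dx')\to 0$ uniformly in $R$ as $\delta\to 0$, which for subcritical $\a<2$ follows from standard multifractal moment estimates for GMC but is extra machinery not present in the paper. One further point: your bound on $(A)$ invokes a ``multivariate extension'' of Lemma~\thv(lem.sec.2) to accommodate the full barrier constraint on $[2r,n]$, whereas Lemma~\thv(lem.sec.2) as stated covers only events of the form $\{\sum Y_k\in A\}$. The paper avoids this issue by discarding the barrier (passing from $W^{\leq}$ to $W$) \emph{before} decoupling, so that only the one-dimensional endpoint event survives and Lemma~\thv(lem.sec.2) applies directly.
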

\begin{proof}[Proof of Lemma \thv(lem.four)]
As $x\wedge x'<r-\D$ and by a similar rewriting in \eqv(first.9) we have by Lemma \thv(lem.sec.2) that it is bounded from above by
\bea\Eq(sec.6)
&&\iint_{\{x\wedge x'\leq r-\D\}} \prod_{ y\in\{x,x'\}}\P\left( \sum_{k=r+1}^{n}Y_k> \frac{\alpha}{2} n - G_R(y)-E_R(y)-\e)\Big|\FF_R\right) dx'dx\left(1+O\left(e^{-c\D}\right)\right)\\\nonumber
&\leq& \iint_{[0,1]^2} \prod_{y\in\{x,x'\}}\P\left( \sum_{ k=r+1}^{n}Y_k> \frac{\alpha}{2} n - G_R(y)-E_R(y)-\e)\Big |\FF_R\right) dx'dx \left(1+O\left(e^{-c\D}\right)\right)
\eea
We now compare \eqv(sec.6) with $\left(\E\left(W^{\leq}_{\a,N}\vert \FF_R\right)\right)^2$ which is bounded below by
\be\Eq(sec.7)
\iint_{[0,1]^2} \prod_{y\in\{x,x'\}}\P\left( G_N(y)-G_R(y)> \frac{\alpha}{2}n - G_R(y)-E_R(y)+\e)|\FF_R\right) dx'dx
\ee
for any $\e>0.$
By \thv(Lem.first2) and the Gaussian approximation given in \eqv(Lem.sec.1) the absolute value of the difference of \eqv(sec.6) and \eqv(sec.7) is bounded by
\be\Eq(sec.8)
M_{\a,N}^2 \E(W_{\a,N})^2 e^{-2\e}  \left(1+O\left(e^{-c\D}\right)\right)- M_{\a,N}^2e^{2\a\epsilon}.
\ee
Hence \eqv(sec.8) divided by $\E(W_{\a,N})^2$ converges almost surely to \be M_{\a}^2  e^{-2\a\e} \left(e^{-2\a\e}-e^{2\a\e}+e^{-2\a\e}O\left(e^{-c\D}\right)\right)\ee
for all $\e,\Delta>0$. Note that \eqv(sec.8) converges to zero as $\e \to0 $ and $\D\to\infty$.
\end{proof}

To control the terms $(I)$, $(II)$ and $(III)$, we prove the following lemma.
\begin{lemma}\TH(Lem.sec4)
Let $0<\a<2$. There exists $\kappa_\a>0$  such that for $R=o(\log \log N)$ as $N\to \infty$ we have 
\be\Eq(sec1)
\E\left(  (I)+(II)+(III) \right)
\leq \E\left(W _{\a,N}\right)^2e^{-\kappa_{\alpha} r}.
\ee
\end{lemma}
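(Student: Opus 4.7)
The plan is to bound each of $(I)$, $(II)$, $(III)$ by conditioning on the branching point $m = x \wedge x'$. For fixed $m$ the set of admissible pairs has Lebesgue measure $\asymp e^{-m}$, and Lemmas \thv(Lem.sec.3) and \thv(lem.sec.2) reduce the joint law of $(Y_k(x), Y_k(x'))_{k \leq n}$ to a tractable surrogate in which the increments indexed by $k \leq m - \D$ are identical for $x$ and $x'$ and those indexed by $k > m + \D$ are independent, modulo a factor $1 + O(e^{-c\D})$; the $O(\D)$ intermediate increments are absorbed into a constant via a Gaussian exponential moment bound. The event defining $W^{\leq}_{\a,N}$ then factorises into one common walk on $[0, m]$ plus two independent walks on $[m, n]$, all subject to the barrier $\sum_{j \leq k} Y_j(\cdot) \leq (\a + \e) k/2$ on $[2r, n]$.

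Writing $s$ for the common value at time $m$, a Gaussian tail bound for each post-branching walk controls the joint probability at branching level $m$, up to polynomial prefactors, by
\be\Eq(proposal.1)
\int_{-\infty}^{s_{\max}(m)} e^{-s^{2}/m}\cdot e^{-2(\a n/2 - s)^{2}/(n-m)}\,ds,
\ee
with effective truncation $s_{\max}(m)$ induced by the barrier: $s_{\max}(m) = (\a + \e) m/2$ when $m \geq 2r$, and $s_{\max}(m) = \a m/2 + O(\e r)$ when $m < 2r$ via a Laplace step on the sub-walk from $m$ to $2r$ constrained to end below $(\a + \e) r$. Without this truncation the unconstrained saddle $s^{*} \approx \a m$ would produce a per-level ratio $e^{\a^{2} m/2}$ to $\E(W_{\a, N})^{2}$, not summable against $e^{-m}$ for $\a \geq \sqrt{2}$. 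Since $s^{*} > s_{\max}(m)$ as soon as $\e < \a/2$ and $m \gtrsim \e r$, the integral is dominated at the boundary $s \approx \a m/2$, giving exponent $-\a^{2} n/2 + \a^{2} m/4 + O(\e r + \e m)$; combined with the measure factor $e^{-m}$, the per-level contribution becomes $\E(W_{\a, N})^{2} \cdot e^{-m(1 - \a^{2}/4) + O(\e)}$.

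Summing: region $(I)$ with $m \geq n - \D$ contributes $\E(W_{\a, N})^{2} \cdot e^{-n(1 - \a^{2}/4)}$, negligible since $r = o(n)$; regions $(II)$ and $(III)$ are geometric sums in $m$ dominated by the smallest admissible $m \approx r$, yielding the stated bound with $\k_{\a} = 1 - \a^{2}/4 - O(\e) > 0$ for $\a < 2$ and $\e$ sufficiently small. The main technical obstacle is establishing the Laplace saddle rigorously in the presence of the barrier, uniformly in $(m, s)$: one needs ballot/Bramson-type estimates for the common walk on $[r, m]$ with endpoint $s$ and for each post-branching walk on $[m, n]$, sharp enough for the polynomial prefactors to match those in $\E(W_{\a, N})^{2} \sim n^{-1} e^{-\a^{2} n/2}$. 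A secondary delicate point is the case $m < 2r$: the barrier is then active only at times $\geq 2r$, so its effective constraint on $s$ must be transferred through an intermediate Laplace evaluation on $[m, 2r]$, which requires care but ultimately produces the same $\a^{2} m/4$ savings in the exponent.
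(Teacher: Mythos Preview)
Your proposal follows the same architecture as the paper's proof: fix the branching level $m=x\wedge x'$, use Lemmas~\thv(Lem.sec.3) and~\thv(lem.sec.2) to reduce to a single common walk on $[r,m]$ and two independent walks on $[m,n]$ (absorbing the $O(\D)$ intermediate increments via an exponential moment bound), perform a Laplace analysis in the common endpoint $s$ with truncation at the barrier level, and sum the resulting geometric series in $m$ with ratio $e^{-(1-\a^2/4)}$. Three places where the paper proceeds differently and more simply:

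\textbf{Region $(III)$.} You propose to transfer the barrier constraint from time $2r$ back to $m<2r$ via an intermediate Laplace step. The paper simply drops the barrier for $(III)$: since $m\in(r-\D,r+\D]$ comprises only $2\D$ levels, the measure factor $e^{-m}\asymp e^{-r}$ alone yields $\E((III))\leq e^{C\D^2}e^{-r+\D}\E(W_{\a,N})^2$, already stronger than $e^{-\k_\a r}$.

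\textbf{Large $m$ in $(II)$.} Your assertion that $s^*>s_{\max}(m)$ throughout $(II)$ is not correct near the top: since $s^*=\a mn/(n+m)$ one has $s^*<(\a+\e)m/2$ once $m>n(\a-\e)/(\a+\e)$, so the saddle falls inside the truncation and your boundary evaluation does not apply. The paper treats the strip $(1-\d)n<m\leq n-\D$ separately (equations \eqv(second.29)--\eqv(second.32)), bounding the integral by its unconstrained saddle value $\exp(-\a^2 n^2/(2(n+m)))$; combined with the measure factor $e^{-m}$ this gives exponent $-m+\a^2 nm/(2(n+m))$, which for $m\geq(1-\d)n$ and $\d$ small is still $\leq -cn$, hence negligible.

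\textbf{Prefactors.} The ballot/Bramson estimates you anticipate are not needed. The two post-branching Gaussian tails contribute a combined polynomial prefactor $\asymp (n-m)^{-1}$, and since the dominant levels have $m=O(r)=o(n)$ this already matches the $n^{-1}$ in $\E(W_{\a,N})^2\asymp n^{-1}e^{-\a^2 n/2}$. Plain Gaussian tail bounds suffice throughout.
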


\begin{proof}[Proof of Lemma \thv(Lem.sec4)]
We bound $\E((I)\vert\FF_R)$ from above by
\be\Eq(second.10)
e^{-n+\D}\int_0^1\P\left(\sum_{k\leq n}Y_k(x)>\frac{\alpha}{2}n\Big\vert\FF_R \right)dx=
e^{-n+\D}\E(W_{\a,N}\vert \FF_R),
\ee
by \eqv(first.8). Hence,
\be\Eq(second.11)
\E((I))\leq \E(W_{\a,N})^2 \frac{e^{-n+\D}}{\E(W_{\a,N})}=E(W_{\a,N})^2 o(1),
\ee
as $\E(W_{\a,N})= c n^{-1/2}e^{-\frac{\alpha^2}{4}n}$ and $0<\alpha<2$.

Next, we turn to $\E((II)\vert \FF_R)$. 
Using that uniformly in $y$ for all $R,N$ large enough $\vert E_N(y)-E_R(y)\vert \leq \epsilon$, we can bound $\E((II)\vert \FF_R)$ from above by
\be\Eq(second.12)
\iint_{\{r+\D\leq x\wedge x'\leq n-\D\}}\P\left(\forall_{y\in\{x,x'\}}\sum_{j=r+1}^nY_j(y)>\frac{\a}{2}n-X_R(y)-\epsilon,\forall_{k\in[r,n] }\sum_{j=r}^kY_j(y)\leq \frac{\alpha+\epsilon}{2} k \Big\vert \FF_R\right) dxdx' 
\ee
Dropping the barrier constraint except at $x\wedge x'-\Delta$ and $x\wedge x'+\Delta$ we can bound the probability in \eqv(second.12) from above by
\be\Eq(second.12.2)
 \P\left(\forall_{y\in\{x,x'\}}\sum_{j=r+1}^nY_j(y)>\frac{\a}{2}n-X_R(y)-\epsilon,\; \forall_{k\in\{x\wedge x'-\Delta,x\wedge x'+\Delta\}} \sum_{j=1}^{k}Y_j(y)\leq \frac{\alpha+\epsilon}{2}k \Big\vert \FF_R\right).
\ee
We evaluate the probability in the integral at a fixed $x\wedge x'=m$, and sum the contributions over $m$ afterwards.
We introduce an extra conditioning. Let $\FF^Y_{k}=\sigma(Y_j,j\leq k)$. We condition on  $\FF^Y_{m+\D}$, slightly after the branching point.
Lemma \thv(lem.sec.2) applied to   \eqv(second.12.2) then yields
\bea\Eq(second.13)
&&  \left(1+e^{-c\D}\right)\E\Bigg(\prod_{y\in\{x,x'\}}\P\left(\sum_{k=m+\D+1}^n Y_k(y)>\frac{\a}{2}n-X_R(y)-\epsilon-\sum_{r<j\leq m+\D}Y_j(y)\Big\vert \FF^Y_{m+\D}\right) \nonumber\\
&&\hspace{3cm}; \forall_{y\in \{x,x'\}, k\in\{m-\D,m+\D\}}\sum_{j\leq k}Y_j(y)\leq \frac{\alpha+\epsilon}{2}k \Big\vert \FF_R\Bigg).
\eea

We distinguish two cases. First, consider the case when for $y=x$ or $y=x'$, \be\frac{\a}{2}n-X_R(y)-\epsilon-\sum_{r<j\leq m+\D}Y_j(y)\leq 0.\ee  Note that due to the barrier in \eqv(second.13) this can only happen jointly with the barrier event if $m\geq \frac{\a}{\a+\e}n-C'\e$ for some constant $C'>0$ independent of $\e$. In this case we bound the probabilities above by one and bound \eqv(second.13) from above by 
\be\Eq(lisa.neu.2)
  \left(1+e^{-c\D}\right) \P\left( \frac{\a}{2}n-X_r(y)-\epsilon-\sum_{r<j\leq m+\D}Y_j(y)\leq 0: \forall_{y\in \{x,x'\}}\sum_{j\leq m+\D}Y_j(y)\leq \frac{\alpha+\epsilon}{2}(m+\D) \Big\vert \FF_R\right).
\ee
As for an upper bound we can drop all constraints in the expectation with respect $x'$ (if $y=x$) and $x$ otherwise, let us assume without loss of generality that $y=x$. We need to distinguish whether $\frac{\a}{2}n-X_R(x)-\epsilon>0$ or not.  On the event $\frac{\a}{2}n-X_R(x)-\epsilon\leq 0$ we bound the expectation in \eqv(lisa.neu.2) by one and obtain that the expectation of \eqv(lisa.neu.2) from above by
\be\Eq(lisa.neu.5)
\P\left(X_R(x)\geq \frac{\a}{2}n-\epsilon\right)\leq \E\left( e^{\a X_R(x) -\a\left(\frac{\a}{2}n-\epsilon\right)}\right)
\ee
by the exponential Chebyshev inequality. Hence, integrating over $x,x'$ in \eqv(lisa.neu.5) we get
\bea\Eq(lisa.neu.6)
e^{-\frac{\a}{\a+\e}n-C'\e}\int_0^1\E\left( e^{\a X_R(x) -\a\left(\frac{\a}{2}n-\epsilon\right)}\right)dx &&\leq \E\left(\int_0^1e^{\a X_r(x)}\right)e^{-\a^2 n/2-\a\e}\nonumber\\
&&\leq Cn\E\left(W_{\a,N}\right)^2e^{-\frac{\a}{\a+\e}n-C'\e} e^{-\a r-\a\e},
\eea
by \eqv(first.12). When $\frac{\a}{2}n-X_R(x)-\epsilon>0$, we bound \eqv(lisa.neu.2) from above using Gaussian tail asymptotics by
\be\Eq(lisa.neu.4)
\left(1+e^{-c\D}\right) \P\left( \sum_{r<j\leq m+\D}Y_j(x)\geq\frac{\a}{2}n-X_R(y)-\epsilon \Big\vert \FF_R\right)\leq \left(1+e^{-c\D}\right)e^{-\frac{\left(\frac{\a}{2}n-X_R(x)-\epsilon\right)^2 }{2\s_r(m+\D)}}.
\ee
The integral of \eqv(lisa.neu.4) with respect to $x$ and $x'$ can be bounded from above by 
\bea\Eq(lisa.neu.5)
&&\left(1+e^{-c\D}\right)\sum_{\frac{\a}{\a+\e}n-C'\e \leq m\leq n-\D}e^{-m} \int_0^1 e^{-\frac{\left(\frac{\a}{2}n-X_r(x)-\epsilon\right)^2 }{2\s_r(m+\D)}} dx\\
&\leq& 
\left(1+e^{-c\D}\right)\sum_{\frac{\a}{\a+\e}n-C'\e \leq m\leq n-\D} e^{-m} \int_0^1 e^{-(\a^2n/4)-\frac{\a^2n(n-2\s_r(m+\D))}{8\s_r(m+\D)} } 
e^{\a \frac{n\e+nX_r(x)}{2\s_r(m+\D)}}dx\nonumber\\
&\leq & \left(1+e^{-c\D}\right)\sum_{\frac{\a}{\a+\e}n-C'\e\leq m\leq n-\D} e^{-m} \int_0^1e^{-(\a^2n/2)-\frac{\a^2(n-2\s_r(m+\D))^2}{8\s_r(m+\D)} +\frac{\a^2}{4}(2\s_r(m+\D)) }e^{\a \frac{n\e+nX_r(x)}{2\s_r(m+\D)}}dx\nonumber
\eea
Using that in the range of summation  in \eqv(lisa.neu.5) $\s_r(m+\D)$ is bounded form above and below by $\frac{1}{2}(m-r)+C$ resp. $\frac{1}{2}(m-r)-C$ , for some constant large enough, we can bound \eqv(lisa.neu.5) from above by
\be\Eq(lisa.neu.6)
\left(1+e^{-c\D}\right)\sum_{\frac{\a}{\a+\e}n-C'\e \leq m\leq n} \int_0^1 e^{-(\a^2n/2)-\frac{\a^2(n- (m-r)-C )^2}{4(m-r+C)}}e^{\left(\frac{\a^2}{4}-1\right)m} e^{\a \frac{n(\e+X_r(x))}{m+\D-r-C}+C \D}dx.
\ee
As $m\geq \frac{\a}{\a+\e}n-C'\e$,   exponantial term in $\epsilon$ bounded by $e^{C\e}$ and as $0<\a<2$ we have that on the one hand $\frac{\a^2}{4}-1<0$ and on the other hand we can choose  together with \eqv(first.12) we can bound the corresponding expectation  in \eqv(lisa.neu.6) from above by
\be
\left(1+e^{-c\D}\right)E(W_{\a,N})^2e^{-c n}e^{cr},
\ee
 for some $c>0$.

Finally, we turn to bound \eqv(second.13) for $\frac{\a}{2}n-X_r(y)-\epsilon-\sum_{r<j\leq m+\D}Y_j(y)\geq 0$ we can bound \eqv(second.13) from above by a Gaussian tail bound and obtain
\bea\Eq(second.14)
&&
\E\Bigg( \frac{(n-m-\D)/2}{2\pi\prod_{y\in\{x,x'\}}(\frac{\a}{2}(n- m-\D-\e)-X_R(y)-\epsilon )
}
 \1_{\forall_{y\in\{x,x'\}, k\in\{m-\D,m+\D\}}\sum_{j\leq k}Y_j(y)\leq \frac{\alpha+\epsilon}{2}k }  \\\nonumber
&& \quad\quad\quad\times
\left.\exp\left({-\sum_{y\in\{x,x'\}}\frac{\left(\frac{\a}{2}n-X_R(y)-\epsilon-\sum_{r<j\leq m+\D}Y_j(y)\right)^2}{(n-m-\D)}  } 
\right)\vert\FF_R\right) 
\eea
Next, we condition on  $\FF^Y_{m-\Delta}$. The  terms depending on $\sum_{m-\D<j\leq m+\D}Y_j$ can be bounded by the moment generating function:
\be\Eq(second.15)
\E\left(e^{C\D \sum_{m-\D<j\leq m+\D}Y_j(x)+Y_j(x')}\right)
\leq e^{C'\D^2}.
\ee
Hence, Equation \eqv(second.14) is bounded above by 
 \be\Eq(second.16)
 e^{C'\D^2} 
\E\left( \frac{(n-m-\D)/2}{2\pi}
\prod_{y\in\{x,x'\}} \1_{ \sum_{j\leq m-\D}Y_j(y)\leq \frac{\alpha+\epsilon}{2}(m-\D) }\frac{\exp\left(-{\frac{\left(\frac{\a}{2}n-X_R(y)-\epsilon-\sum_{j=r+1}^{m-\D}Y_j(y)\right)^2}{(n-m-\D)}  }\right)}{\frac{\a}{2}(n- m-\D-\e)-X_R(y)-\epsilon}
\Big\vert\FF_R\right)
\ee
Using the fact that the variables $Y_j(x)$ and $Y_j(x')$  almost coincide for $j\leq m-\D$ by Lemma \thv(Lem.sec.3), we have that \eqv(second.16) is bounded above by
\be\Eq(second.17)
e^{C'\D^2} \sfrac{(n-m-\D)/2}{2\pi(\frac{\a}{2}(n- m-\D-\e)-X_R(x)-\epsilon )^2 }
\E\left(  \1_{ \sum_{j\leq m-\D}Y_j(x)\leq \frac{\alpha+\epsilon}{2}(m-\D) }e^{-\frac{2\left(\frac{\a}{2}n-X_R(x)-\epsilon-\sum_{j=r+1}^{m-\D}Y_j(x)\right)^2}{(n-m-\D)}  } \Big\vert\FF_R\right) (1+O(e^{-c\D}))
\ee
The expectation in \eqv(second.17) is equal to
\bea\Eq(second.26)
\int_{-\infty}^{\frac{\alpha+\e}{2}  (m-\D) }e^{-\frac{2\left(\frac{\a}{2}n-X_r(x)-\epsilon-z\right)^2}{(n-m-\D)}  } e^{-\frac{z^2}{ (m-\D-r)}} \frac{dz}{\sqrt{\pi  (m-\D-r)}}.
\eea
The integrand with respect to $z$ is minimal for
\be\Eq(second.25)
z^*=\frac{2\left(\frac{\a}{2}n-X_R(x)-\e\right)\left(m-\D-r\right)}{n+m-3\D-2r}.
\ee
When $\frac{\a+\e}{2}m-z^*\ll0$ which is the case when $m<(1-\d)  n$ for some $\d>0$, we can use Gaussian tail asymptotics to bound \eqv(second.26) from above by
\be\Eq(second.27)
\exp\left(-\frac{2\left(\frac{\a}{2}n-X_R(x)-\epsilon-\frac{\a+\e}{2}m\right)^2}{(n-m-\D)}  -\frac{\left(\frac{\a+\e}{2}m\right)^2}{ (m-\D-r)} \right).
\ee
Plugging this bound into \eqv(second.17), summing over $m<(1-\d)n$, and computing the squares in the exponential, we obtain that \eqv(second.17) is bounded from above by
\be\Eq(second.28)
 \sum_{l=r+\D}^{(1-\d)n} e^{\left(\frac{\a^2}{4}-1\right)l} e^{C\D+C\epsilon} \E(W_{\a,N})^2 (1+o(1)),
\ee
If $x\wedge x'<(1-\d)  n$ we can bound the Gaussian integral by one and get that \eqv(second.7) is bounded from above by
\be\Eq(second.29)
e^{-\frac{2\left(\frac{\a}{2}n-X_R(x)-\epsilon\right)^2}{(n-m-\D)}  } e^{+\frac{\left(z^*\right)^2}{ (m-\D-r)}} \frac{dy}{\sqrt{\pi  (m-\D-r)}}e^{-\frac{2\left( z^*\right)^2}{(n-m-\D)}}
\ee
Using \eqv(second.25) we can bound the expectation of \eqv(second.29) for $m>(1-\d)n$ by 
\be\Eq(second.30)
e^{C(\D+\e)} \E\left(W_{\a,N}\right)^2 \E\left(e^\frac{2\left(\frac{\a}{2}n-X_R(x)-\epsilon\right)^2m}{n\left(n+m\right)}\right)
\ee
Plugging this into \eqv(second.17) we can bound the contribution from above
\be\Eq(second.31)
\sum_{m>(1-\d)n} 2^{-m} e^{C(\D^2+\e)} \E\left(W_{\a, N}\right)^2 \E\left(e^\frac{2\left(\frac{\a}{2}n-X_r(x)-\epsilon\right)^2m}{n\left(n+m\right)}\right)
\ee
Noting that $2n-n\d\leq n+m\leq 2n$ the above term can be bounded from above by 
\be\Eq(second.32)
\sum_{m>(1-\d)n} 2^{-m+\frac{2\frac{\a^2}{4} n^2}{n^2(2-\d)l}}e^{C(\D^2+\e)}.
\ee
Note that the exponent in \eqv(second.32) is negative for $\d$ sufficiently small.

Finally, we want to bound $\E((III))$. By Lemma \thv(lem.sec.2) we have similar to \eqv(second.13) that $\E((III))$ is bounded from above by $\left(1+e^{-c\D}\right)$ times
\bea\Eq(second.20)
 &&\sum_{m=r-\D+1}^{r+\D}\iint_{ \{x\wedge x'=m\}}
\E\left(\prod_{y\in\{x,x'\}} \right.\P\left(\sum_{k=m+\D+1}^n Y_k(y)>\frac{\a}{2}n-X_R(y)-\epsilon-\sum_{j=r+1}^{m+\D}Y_j(y)\Big\vert \FF^Y_{m+\D}\right)  \nonumber\\
&&\hspace{6cm};  \left.\forall_{y\in \{x,x'\} }\sum_{j\leq m+\D}Y_j(y)\leq \frac{\alpha+\epsilon}{2}(m+\D) \right)dxdx' 
\eea
  If $ \frac{\a}{2}n-X_R(y)-\epsilon-\sum_{j=r+1}^{m+\D}Y_j(y) > 0$ for $y\in\{x,x'\}$ we  can use  Gaussian tail asymptotics for the probabilities in \eqv(second.20) to bound  the expectation in \eqv(second.20) from above by
\be\Eq(second.21)
\E\left( 
\frac{(n-r)/2}{2\pi\prod_{y\in\{x,x'\}}(\frac{\a}{2}(n- r-2\D-\e)-X_R(y)-\epsilon )} e^{-\frac{\left(\frac{\a}{2}n-X_R(x)-\epsilon-\sum_{j=r+1}^{m+\D}Y_j(x)\right)^2}{(n-m-\D)}  } e^{-\frac{\left(\frac{\a}{2}n-X_R(x')-\epsilon-\sum_{j=r+1}^{m+\D}Y_j(x')\right)^2}{(n-m-\D)} } \right).
\ee
Noticing that the polynomial prefactor is bounded by $C/n$ and otherwise proceeding as  in \eqv(second.15) we can bound \eqv(second.20) from above by
\bea\Eq(second.22)
&&e^{C'\D^2}\sum_{m=r-\D+1}^{r+\D}\iint_{ \{x\wedge x'=m\}}
\E\left( 
 \frac{C}{n}e^{\frac{\left(\frac{\a}{2}n-X_R(x)-\epsilon \right)^2}{(n-m-\D)}  } e^{\frac{\left(\frac{\a}{2}n-X_R(x')-\epsilon \right)^2}{(n-m-\D)} }\vert\FF_R\right)dx'dx\left(1+O\left(e^{-c\D}\right)\right)\nonumber\\
&&\leq e^{C'\D^2-C\D}e^{-r+\D} \E\left(\E\left(W_{\a,N}\right)^2e^{2\a \epsilon} \left(1+O\left(e^{-c\D}\right)\right) \right),
\eea
by \eqv(first.10) for any $\e>0$ and $\D>0$.
 Note first that due to the barrier event in \eqv(second.20) the case $ \frac{\a}{2}n-X_R(y)-\epsilon-\sum_{j=r+1}^{m+\D}Y_j(y) \leq 0$ for at least one $y\in\{x,x'\}$   can be excluded for $m\in\{r-\D,r+\D\}$. 

This completes the control of $(III)$ and hence also the proof of Theorem \thv(Lem.sec4).

\end{proof}

\begin{proof}[Proof of Proposition \thv(prop.sec)]
We bound \eqv(sec2) from above by
\bea\Eq(sec.neu.1)
&&\P\left(\frac{ (I)+(II)+(III)}{\E\left(W_{\a,N}\right)^2} >c^2/8\right)+ \P\left(\frac{\E((IV)\vert\FF_R)-\left(\E\left(W^{\leq}_{\a,N}\vert \FF_R\right)\right)^2}{\E\left(W_{\a,N}\right)^2}>c^2/8\right) +Ce^{-R c(\epsilon)}\\
&&\leq  \frac{8}{c^2}\E\left(\frac{ (I)+(II)+(III)}{\E\left(W_{\a,N}\right)^2}\right)+ \P\left(\frac{\E((IV)\vert\FF_R)-\left(\E\left(W^{\leq}_{\a,N}\vert \FF_R\right)\right)^2}{\E\left(W_{\a,N}\right)^2}>c^2/8\right) +Ce^{-R c(\epsilon)}\nonumber
\eea
where we used Chebyshev's inequality. By Lemma \thv(Lem.sec4) we can bound \eqv(sec2) from above by 
\be\Eq(sec3)
\frac{ 8\E\left(W _{\a,N}\right)^2e^{-\kappa_{\alpha} r}}{c^2\E\left(W_{\a,N}\right)^2}+\P\left(\frac{\E((IV)\vert\FF_R)-\left(\E\left(W^{\leq}_{\a,N}\vert \FF_R\right)\right)^2}{\E\left(W_{\a,N}\right)^2}>c^2/8\right) \leq \frac{4}{c^2}e^{-\kappa_{\alpha} r}+ Ce^{-R c(\epsilon)},
\ee
which yields Proposition \thv(prop.sec) by possibly modifying the constants and noting that $\epsilon$ in \eqv(sec3) is arbitrary (but fixed) as the claim of Lemma \thv(lem.four) holds almost surely in the $N\to\infty$ limit.
\end{proof}

\section{Proof of Theorem \thv(THM.1)}
Finally, we are in the position to prove Theorem \thv(THM.1) using Lemma \thv(Lem.first2) and Proposition \thv(prop.sec).
\begin{proof}[Proof of Theorem \thv(THM.1)]
First, we rewrite
\be\Eq(proof.1)
\frac{W_{\a,N}}{\E\left(W_{\a,N}\right)} 
=\frac{\E\left(W_{\a,N}\vert \FF_R\right)}{\E\left(W_{\a,N}\right)} 
+ \frac{W_{\a,N}-\E\left(W_{\a,N}\vert \FF_R\right)}{\E\left(W_{\a,N}\right)}\ .
\ee
By Proposition \thv(prop.sec) the second summand on the right hand side of \eqv(proof.1) converges to zero in probability when first $N\to\infty$ and then $R\to\infty$.  By Lemma \thv (Lem.first2) the term the first summand on the right hand side of \eqv(proof.1) converges almost surely to $M_\a$ defined in \eqv(model.5). This completes the proof of Theorem \thv(THM.1). 
\end{proof}

\bibliographystyle{abbrv}

\end{document}